\newtheorem{theorem}{Theorem}[section]
\newtheorem{lemma}[theorem]{Lemma}
\newtheorem{proposition}[theorem]{Proposition}
\newtheorem{corollary}[theorem]{Corollary}
\newtheorem{example}[theorem]{Example}
\theoremstyle{plain}
\theoremstyle{definition}
\newtheorem{definition}[theorem]{Definition}
\newtheorem{remark}[theorem]{Remark}
\numberwithin{equation}{section}
\renewcommand{\theenumi}{(\roman{enumi})}
\renewcommand{\labelenumi}{\textup{(\theenumi)}}
\title{Total extension groups for unital Kirchberg algebras
}
\author{Kengo Matsumoto \\
Department of Mathematics \\
Joetsu University of Education \\
Joetsu, Niigata 943-8512, Japan
\and
Taro Sogabe \\
Department of Mathematics \\
Kyoto University \\
Kyoto,  606-8502, Japan}
\begin{document}


\maketitle

\date{}

\def\det{{{\operatorname{det}}}}

\begin{abstract}
We introduce a hierarchy for unital Kirchberg algebras with finitely generated K-groups 
by which the first and second homotopy groups of the automorphism groups 
serve as a complete invariant of classification.
We also introduce an invariant called the total extension group 
which is the direct sum of the strong and weak extension groups. 
In the case of unital Kirchberg algebras  with finitely generated K-groups,
the total extension group gives a complete invariant and provides a useful tool to classify the Cuntz--Krieger algebras.
\end{abstract}

{\it Mathematics Subject Classification}:
Primary 46L80; Secondary 19K33, 19K35.

{\it Keywords and phrases}: Homotopy group, K-group, Ext-group, KK-theory,  
$C^*$-algebra,  Kirchberg algebra, duality, automorphism group.



\newcommand{\Ker}{\operatorname{Ker}}
\newcommand{\sgn}{\operatorname{sgn}}
\newcommand{\Ad}{\operatorname{Ad}}
\newcommand{\ad}{\operatorname{ad}}
\newcommand{\orb}{\operatorname{orb}}
\newcommand{\rank}{\operatorname{rank}}

\def\Re{{\operatorname{Re}}}
\def\det{{{\operatorname{det}}}}
\newcommand{\K}{\operatorname{K}}
\newcommand{\bbK}{\mathbb{K}}
\newcommand{\N}{\mathbb{N}}
\newcommand{\bbC}{\mathbb{C}}
\newcommand{\R}{\mathbb{R}}
\newcommand{\Rp}{{\mathbb{R}}^*_+}
\newcommand{\T}{\mathcal{T}}

\newcommand{\sqK}{\operatorname{K}\!\operatorname{K}}

\newcommand{\Z}{\mathbb{Z}}
\newcommand{\Zp}{{\mathbb{Z}}_+}
\def\AF{{{\operatorname{AF}}}}

\def\TorZ{{{\operatorname{Tor}}^\Z_1}}
\def\Ext{{{\operatorname{Ext}}}}
\def\Exts{\operatorname{Ext}_{\operatorname{s}}}
\def\Extw{\operatorname{Ext}_{\operatorname{w}}}
\def\Extt{\operatorname{Ext}_{\operatorname{t}}}
\def\Ext{{{\operatorname{Ext}}}}
\def\Free{{{\operatorname{Free}}}}

\def\Ks{\operatorname{K}^{\operatorname{s}}}
\def\Kw{\operatorname{K}^{\operatorname{w}}}

\def\OA{{{\mathcal{O}}_A}}
\def\ON{{{\mathcal{O}}_N}}
\def\OAT{{{\mathcal{O}}_{A^t}}}
\def\OAI{{{\mathcal{O}}_{A^\infty}}}
\def\OAIn{{{\mathcal{O}}_{A^\infty_n}}}
\def\OAInone{{{\mathcal{O}}_{A^\infty_{n+1}}}}

\def\OAIN{{{\mathcal{O}}_{A^\infty_N}}}
\def\OATI{{{\mathcal{O}}_{A^{t \infty}}}}

\def\OSA{{{\mathcal{O}}_{S_A}}}

\def\TA{{{\mathcal{T}}_A}}
\def\TAn{{{\mathcal{T}}_{A_n}}}
\def\TAnone{{{\mathcal{T}}_{A_{n+1}}}}
\def\TAN{{{\mathcal{T}}_{A_N}}}

\def\TN{{{\mathcal{T}}_N}}

\def\TAT{{{\mathcal{T}}_{A^t}}}

\def\TB{{{\mathcal{T}}_B}}
\def\TBT{{{\mathcal{T}}_{B^t}}}

\def\A{{\mathcal{A}}}
\def\B{{\mathcal{B}}}
\def\C{{\mathcal{C}}}
\def\O{{\mathcal{O}}}
\def\OaA{{{\mathcal{O}}^a_A}}
\def\OB{{{\mathcal{O}}_B}}
\def\OTA{{{\mathcal{O}}_{\tilde{A}}}}
\def\F{{\mathcal{F}}}
\def\G{{\mathcal{G}}}
\def\FA{{{\mathcal{F}}_A}}
\def\PA{{{\mathcal{P}}_A}}
\def\PI{{{\mathcal{P}}_\infty}}
\def\OI{{{\mathcal{O}}_\infty}}

\def\whatA{{\widehat{\A}}}

\def\calI{\mathcal{I}}

\def\bbC{{\mathbb{C}}}

 \def\U{{\mathcal{U}}}
\def\OF{{{\mathcal{O}}_F}}
\def\DF{{{\mathcal{D}}_F}}
\def\FB{{{\mathcal{F}}_B}}
\def\DA{{{\mathcal{D}}_A}}
\def\DB{{{\mathcal{D}}_B}}
\def\DZ{{{\mathcal{D}}_Z}}

\def\End{{{\operatorname{End}}}}

\def\Ext{{{\operatorname{Ext}}}}
\def\Hom{{{\operatorname{Hom}}}}

\def\Tor{{{\operatorname{Tor}}}}

\def\Max{{{\operatorname{Max}}}}
\def\Max{{{\operatorname{Max}}}}
\def\max{{{\operatorname{max}}}}
\def\KMS{{{\operatorname{KMS}}}}
\def\Per{{{\operatorname{Per}}}}
\def\Out{{{\operatorname{Out}}}}
\def\Aut{{{\operatorname{Aut}}}}
\def\Ad{{{\operatorname{Ad}}}}
\def\Inn{{{\operatorname{Inn}}}}
\def\Int{{{\operatorname{Int}}}}
\def\det{{{\operatorname{det}}}}
\def\exp{{{\operatorname{exp}}}}
\def\nep{{{\operatorname{nep}}}}
\def\sgn{{{\operatorname{sign}}}}
\def\cobdy{{{\operatorname{cobdy}}}}
\def\Ker{{{\operatorname{Ker}}}}
\def\Coker{{{\operatorname{Coker}}}}
\def\Im{{\operatorname{Im}}}

\def\ind{{{\operatorname{ind}}}}
\def\Ind{{{\operatorname{Ind}}}}
\def\id{{{\operatorname{id}}}}
\def\supp{{{\operatorname{supp}}}}
\def\co{{{\operatorname{co}}}}
\def\scoe{{{\operatorname{scoe}}}}
\def\coe{{{\operatorname{coe}}}}
\def\I{{\mathcal{I}}}
\def\Span{{{\operatorname{Span}}}}
\def\event{{{\operatorname{event}}}}
\def\S{\mathcal{S}}

\def\calK{\mathcal{K}}
\def\calP{\mathcal{P}}

\section{Introduction}
The Cuntz--Krieger algebras introduced in \cite{CK} 
provide many interesting examples of Kirchberg algebras 
including the Cuntz algebras $\mathcal{O}_n, n=2,3,\dots,\infty$.
Thanks to the Kirchberg--Phillips' classification theorem and M. Dadarlat's formulas \cite[Corollary 5.10]{Dadarlat2007},
the isomorphism classes of Kirchberg algebras and the homotopy groups of their automorphism groups 
can be understood via their K-groups and KK-groups.
In \cite{MatSogabe},
the authors investigated the homotopy groups of the automorphism groups of Cuntz--Krieger algebras  
by using the Dadarlat's formulas above,
and clarified a relationship between the extension groups 
and the homotopy groups of the automorphism groups 
to show that the strong and weak extension groups as well as  
the homotopy groups are complete invariants to classify Cuntz--Krieger algebras.
In relation to the homotopy groups, 
the second named author introduced a duality, called reciprocality, 
for unital Kirchberg algebras which comes from the study of the  bundles of Kirchberg algebras \cite{Sogabe2022}.
He proved that two unital Kirchberg algebras $\A, \B$ with finitely generated K-groups are 
either isomorphic  or reciprocal if and only if 
their homotopy groups of the automorphism groups are isomorphic 
\cite[Theorem 1.2]{Sogabe2022}.
The authors in \cite{MatSogabe}
showed that the reciprocal algebra of a Cuntz--Krieger algebra can not be realized as 
any Cuntz--Krieger algebra, so that the homotopy groups of the automorphism groups
of Cuntz--Krieger algebras determine the isomorphism classes of the Cuntz--Krieger algebras.

In the present paper,
we will first introduce a hierarchy 
$\mathcal{K}(l, w), \; l\in\Z,\; w\in \{0, 1\}$ 
for unital UCT Kirchberg algebras with finitely generated K-groups (see Section \ref{sec:3.2}).
By using the notion of hierarchy, 
we give a generalization of \cite{MatSogabe} as in Theorem \ref{thm:1.1}.
Let $\mathcal{K}(l, 0)$ (resp. $\mathcal{K}(l, 1)$) 
be the class of unital Kirchberg algebras $\A$
with finitely generated K-groups 
such that the difference of the ranks of the K-groups satisfies 
$l=\rank( \K_0(\A))-\rank(\K_1(\A))$ and the unit $[1_\A]_0$ is a torsion (resp. non-torsion) element in $\K_0(\A)$.
\begin{theorem}[{Theorem \ref{Hei}}] \label{thm:1.1}
The reciprocality gives a bijective correspondence
\[\mathcal{K}(l, w)\ni \A\mapsto \widehat{\A}\in\mathcal{K}(1-l, 1-w),\]
up to isomorphisms for each $l\in\Z$ and $w \in \{0,1\}$.
In particular, if both of two unital Kirchberg algebras $\A, \B$ 
lie in either 
$\mathcal{K}_{>0}:=\bigcup_{l>0,\;w=0, 1}\mathcal{K}(l, w)$ 
or 
$\mathcal{K}_{\leq 0}:=\bigcup_{l\leq 0, w=0, 1}\mathcal{K}(l, w)$,
then the conditions 
$\pi_i(\Aut(\A))\cong \pi_i(\Aut(\B)),\, \, i=1, 2$ 
imply $\A\cong \B$.
\end{theorem}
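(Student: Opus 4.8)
The plan is to treat the two assertions separately: the bijective correspondence reduces to computing how reciprocality acts on the pair of invariants $(l,w)$, and the classification statement then follows formally from Sogabe's dichotomy together with the disjointness of the two regions.

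For the correspondence I would isolate two points: that reciprocality carries $\mathcal{K}(l,w)$ into $\mathcal{K}(1-l,1-w)$, and that the resulting assignment is a bijection up to isomorphism. The inclusion is the substantive part, and the tool is the explicit K-theoretic description of $\widehat{\A}$ from \cite{Sogabe2022}, which records $\K_*(\widehat{\A})$ and the class $[1_{\widehat{\A}}]_0$ in terms of the kernel and cokernel of the unit map $\Z\to\K_0(\A),\ 1\mapsto[1_\A]_0$ (equivalently, through the $\K$-theory of the mapping cone of the unital inclusion $\mathbb{C}\hookrightarrow\A$). First I would split into the cases $w=1$ and $w=0$. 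When $[1_\A]_0$ is non-torsion the unit map is injective, so passing to the cokernel lowers $\rank\K_0$ by one and leaves $\widehat{\A}$ with a torsion unit; when $[1_\A]_0$ is torsion the unit map has kernel $\cong\Z$, contributing a free summand on which the unit of $\widehat{\A}$ sits as a non-torsion generator. A direct count of ranks gives $\rank\K_0(\widehat{\A})-\rank\K_1(\widehat{\A})=1-l$ in both cases, and in both cases the torsion type of $[1_{\widehat{\A}}]_0$ is the opposite of that of $[1_\A]_0$, so $w\mapsto 1-w$. Bijectivity is then formal: reciprocality is an involution, $\widehat{\widehat{\A}}\cong\A$, and since the same computation sends $\mathcal{K}(1-l,1-w)$ back into $\mathcal{K}(l,w)$, the two assignments are mutually inverse.

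For the classification I would invoke \cite[Theorem 1.2]{Sogabe2022}: the hypotheses $\pi_i(\Aut(\A))\cong\pi_i(\Aut(\B))$ for $i=1,2$ force $\A$ and $\B$ to be isomorphic or reciprocal, so it suffices to exclude $\A\cong\widehat{\B}$ when both algebras lie on the same side. Writing $\B\in\mathcal{K}(l,w)$, the correspondence just established places $\widehat{\B}\in\mathcal{K}(1-l,1-w)$; hence $l>0$ gives $1-l\leq 0$ and $\widehat{\B}\in\mathcal{K}_{\leq0}$, while $l\leq 0$ gives $1-l\geq 1$ and $\widehat{\B}\in\mathcal{K}_{>0}$. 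Because the difference $\rank\K_0-\rank\K_1$ is an isomorphism invariant, the regions $\mathcal{K}_{>0}$ and $\mathcal{K}_{\leq0}$ are disjoint, so if $\A$ and $\B$ lie in the same region the possibility $\A\cong\widehat{\B}$ is ruled out and only $\A\cong\B$ survives.

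The hard part will be the case analysis in the correspondence: confirming that the reciprocal construction shifts the rank difference by exactly $+1$ and reverses the torsion type of the unit, uniformly across the torsion and non-torsion cases. This rests on reading $[1_{\widehat{\A}}]_0$ off the definition of reciprocality correctly — in particular on locating the free summand of $\K_0(\widehat{\A})$ produced by the kernel of the unit map in the torsion case and verifying that the unit is a non-torsion generator there. Once this bookkeeping is settled, the deduction of the classification statement is purely formal.
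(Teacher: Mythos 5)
Your proposal is correct and takes essentially the same route as the paper: the paper's Lemma \ref{lem:wchi} establishes $\chi(\A)+\chi(\widehat{\A})=1$ and $w(\A)+w(\widehat{\A})=1$ by a uniform rank count through the identifications $\widehat{\A}\sim_{\sqK}D(C_\A)$, $C_{\widehat{\A}}\sim_{\sqK}D(\A)$ and the mapping cone sequence \eqref{M}, whereas you obtain the same two identities by splitting into the torsion/non-torsion cases of $[1_\A]_0$ and tracking the kernel and cokernel of the unit map --- a purely organizational difference resting on the same input (Proposition \ref{dgi}, i.e.\ \cite[Proposition 1.5]{Sogabe2022}). Part (ii) is argued exactly as in the paper: Sogabe's dichotomy \cite[Theorem 1.2]{Sogabe2022} plus the disjointness of $\mathcal{K}_{>0}$ and $\mathcal{K}_{\leq 0}$ rules out $\A\cong\widehat{\B}$.
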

The Cuntz--Krieger algebras $\OA$ 
lie in $\mathcal{K}_{\leq 0}$ 
and the above theorem clarifies 
the reason why the pair
$(\pi_1(\Aut(\OA)), \pi_2(\Aut(\OA)))$
of the homotopy groups  are complete invariants to classify the Cuntz--Krieger algebras 
(Corollary \ref{hico}, \cite[Theorem 4.6]{MatSogabe}).

In the classification theorem of simple Cuntz--Krieger algebras proved by \cite{Ro}, 
the complete invariant is given by the pair 
$(\K_0(\OA), [1_\OA]_0)$ of the $\K_0$-group 
and the position $[1_\OA]_0$ of the unit $1_{\OA}$ in $\K_0(\OA)$.
The authors  proved in \cite{MatSogabe} 
that the pair $(\Exts^1(\OA), \Extw^1(\OA))$ of the strong and weak extension groups 
 is also a complete invariant.
We define the total extension groups of a separable unital nuclear C*-algebra $\A$ by
\begin{equation*}
\Extt^i(\A) : =\Exts^i(\A)\oplus \Extw^i(\A), \quad i=0, 1.
\end{equation*}
As a main result of the present paper,
we show the following theorem.
\begin{theorem}[{Theorem \ref{MTfg}}]\label{thm:1.2}
Let $\A, \B$ be unital Kirchberg algebras with finitely generated K-groups.
Then they are isomorphic if and only if
\[\Extt^i(\A)\cong \Extt^i(\B),\quad i=0, 1.\]
\end{theorem}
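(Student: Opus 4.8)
The plan is to prove Theorem~\ref{thm:1.2} by showing that the total extension groups $\Extt^i(\A)$, $i=0,1$, together encode exactly the K-theoretic data that the Kirchberg--Phillips theorem requires for classification, namely the pair $(\K_0(\A),\K_1(\A))$ together with the position $[1_\A]_0$ of the unit. Since $\A,\B$ are unital UCT Kirchberg algebras, by Kirchberg--Phillips they are isomorphic if and only if there is an isomorphism of the K-groups carrying $[1_\A]_0$ to $[1_\B]_0$. So the entire content is to recover this invariant from $\Extt^0$ and $\Extt^1$.

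\medskip
\noindent\textbf{Identifying the building blocks.} First I would recall the standard computations of the strong and weak extension groups via the UCT. For a unital Kirchberg algebra the extension groups decompose into a free part and a torsion part: the weak extension group $\Extw^i(\A)$ sees the torsion subgroup $\TorZ(\K_{i}(\A))$ (equivalently $\Ext^1_\Z$ of a K-group), while the strong extension group $\Exts^i(\A)$ additionally records the free rank and, crucially, the unit-sensitive data. Concretely one expects formulas of the shape
\begin{equation*}
\Exts^i(\A)\cong \Free(\K_{i-1}(\A))\oplus \TorZ(\K_i(\A)),\qquad
\Extw^i(\A)\cong \TorZ(\K_i(\A)),
\end{equation*}
(indices mod $2$), refined by the position of $[1_\A]_0$. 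These are precisely the computations carried out for Cuntz--Krieger algebras in \cite{MatSogabe} and should extend verbatim to the finitely generated unital Kirchberg case using the UCT short exact sequence. The reason one takes the \emph{direct sum} $\Exts\oplus\Extw$ is that the weak group alone loses the free rank while the strong group alone may not cleanly separate the unit's contribution; combining them restores the full $(\K_*,[1])$ data.

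\medskip
\noindent\textbf{Reconstruction and the main obstacle.} With the above formulas in hand, the forward direction is immediate: an isomorphism $\A\cong\B$ induces isomorphisms $\Extt^i(\A)\cong\Extt^i(\B)$ by functoriality. For the converse I would argue that from the two groups $\Extt^0(\A)$ and $\Extt^1(\A)$ one can read off $\rank(\K_0(\A))$, $\rank(\K_1(\A))$, the torsion subgroups $\TorZ(\K_i(\A))$, and the class of $[1_\A]_0$; feeding these into Kirchberg--Phillips yields $\A\cong\B$. The main obstacle is the last point, recovering the unit datum $[1_\A]_0\in\K_0(\A)$ purely from the abstract isomorphism type of the total extension groups, since an isomorphism of the $\Extt$-groups need not a priori come from a K-theory isomorphism respecting the unit. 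I expect to handle this by showing that the splitting into strong and weak parts is canonical enough that the unit's position manifests as an intrinsic invariant of the group $\Extt^0$ (for instance, as the order of a distinguished cyclic summand, or via the index-type pairing used in the duality of \cite{Sogabe2022}), so that abstract isomorphism of total extension groups forces agreement of the pointed K-theory. Once that refinement is established, the theorem follows by direct appeal to the classification theorem for unital UCT Kirchberg algebras.
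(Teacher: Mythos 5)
Your overall skeleton---reduce to Kirchberg--Phillips by recovering the pointed K-theory $(\K_0(\A),[1_\A]_0,\K_1(\A))$ from the total extension groups, with the forward direction by functoriality---is the same strategy as the paper's, but there is a genuine gap at exactly the point you label ``the main obstacle,'' and the fixes you sketch would not close it. An abstract isomorphism $\Extt^1(\A)\cong\Extt^1(\B)$ need not respect the decomposition into strong and weak summands: by Krull--Schmidt the multiset of indecomposable summands of $\Extt^1$ is determined, but the \emph{partition} of that multiset into an $\Exts^1$-part and an $\Extw^1$-part is not a priori canonical, and an abstract finitely generated abelian group has no ``distinguished cyclic summand'' recording the unit. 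The mathematical core of the paper's proof is the combinatorial Proposition \ref{comb}: for finitely generated abelian groups, $G\oplus(G/\Z g)\cong H\oplus(H/\Z h)$ forces $G\cong H$ and $G/\Z g\cong H/\Z h$. This applies because the six-term sequence \eqref{E6} gives $\Extw^1(\A)\cong\Exts^1(\A)/\Z\iota_\A$, so $\Extt^1(\A)\cong G\oplus(G/\Z g)$ with $G=\Exts^1(\A)$, $g=\iota_\A$, and \eqref{eq:5.4} similarly exhibits $\Exts^0(\A)$ as $\Extw^0(\A)/\Z g$. The proof of Proposition \ref{comb} is where the real work happens: it uses the interleaving condition $(**)$ on elementary divisors of a group and its quotient by a cyclic subgroup (Lemma \ref{star}, from \cite{Sogabe2022}) to split uniquely the divisors occurring an odd number of times, together with the rank-parity Lemma \ref{pari} to pin down $w(G,g)$; afterwards \cite[Proposition 1.5]{Sogabe2022} upgrades the unpointed isomorphisms $G\cong H$, $G/\Z g\cong H/\Z h$ to a pointed one $(G,g)\cong(H,h)$, which is what Kirchberg--Phillips (via Theorem \ref{extcp}) needs. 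Your proposal contains no substitute for this step; it stops precisely at ``I expect to handle this by showing that the splitting \dots is canonical enough.''

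A secondary but consequential error: your ``expected formulas'' are wrong, and in a way that obscures the very structure the argument must exploit. One has $\Extw^1(\A)\cong\Z^{\rank(\K_1(\A))}\oplus\Tor(\K_0(\A))$---the weak group does have a free part; what it forgets is the unit, not the rank---while $\Exts^1(\A)\cong\Z^{\rank(\K_1(\A))+1-w(\A)}\oplus\Tor\bigl(\K_0(\A)/\Z[1_\A]_0\bigr)$, whose torsion part involves $\K_0(\A)/\Z[1_\A]_0$ rather than $\K_0(\A)$. So the strong/weak distinction is not a free/torsion separation; it is the difference between a group and its quotient by the cyclic subgroup generated by the unit class. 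That is exactly the group/quotient structure that Proposition \ref{comb} is designed to disentangle from the direct sum, and without it (or an equivalent) the reconstruction of $(\K_0(\A),[1_\A]_0,\K_1(\A))$ from the isomorphism types of $\Extt^0(\A)$ and $\Extt^1(\A)$ does not go through.
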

Theorem \ref{thm:1.2} shows that for a unital Kirchberg algebra $\A$
with finitely generated K-groups, 
the pair  
$(\Extt^1(\A), \Extt^0(\A))$ 
of two abelian groups
completely remembers the three data $(\K_0(\A), [1_A]_0, \K_1(\A))$.
Corresponding to R\o rdam's result \cite{Ro} 
classifying $\OA$ by $(K_0(\OA), [1_\OA]_0)$,
we  need only the first total extension group 
$\Extt^1(\OA)$ to determine the isomorphism class of $\OA$.
\begin{corollary}[{Corollary \ref{OAinv}}]
Let $A=[A(i,j)]_{i,j=1}^N, B=[B(i,j)]_{i,j=1}^M$ be irreducible non-permutation matrices
with entries in $\{0,1\}$.
Then the Cuntz--Krieger algebras $\OA$ and $\OB$ are isomorphic 
if and only if one has
\[\Z^{2N}/(I_{2N}-\widehat{A}\oplus A)\Z^{2N}\cong \Z^{2M}/(I_{2M}-\widehat{B}\oplus B)\Z^{2M},\]
where the $N\times N$ (resp. $M\times M)$
matrix $\widehat{A} $ (resp. $\widehat{B}$)
 is defined by
 $\widehat{A}:=A + R_1 -AR_1$ (resp. $\widehat{B}:=B + R_1 -BR_1$), where
 $$R_1:=
 \begin{bmatrix}
 1&\cdots&1\\
 0&\cdots &0\\
 \vdots&\ddots&\vdots\\
0&\cdots&0
\end{bmatrix}.
$$
\end{corollary}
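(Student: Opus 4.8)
The plan is to obtain the corollary as the specialization of Theorem \ref{MTfg} to the Cuntz--Krieger algebras $\OA,\OB$, after (a) identifying $\Extt^1(\OA)$ with the displayed cokernel group and (b) showing that, for Cuntz--Krieger algebras, the $\Extt^0$-hypothesis of Theorem \ref{MTfg} is automatic once the $\Extt^1$-groups agree. Since $A$ is irreducible and not a permutation, $\OA$ is a unital UCT Kirchberg algebra with finitely generated K-groups, so Theorem \ref{MTfg} is applicable.

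First I would recall the two extension-group computations for $\OA$ from \cite{MatSogabe}. The weak group is the Bowen--Franks group
\[\Extw^1(\OA)\cong \Z^N/(I_N-A)\Z^N,\]
while the strong group, which in addition remembers the class $[1_\OA]_0$ of the unit, is
\[\Exts^1(\OA)\cong \Z^N/(I_N-\widehat A)\Z^N.\]
The algebraic input linking the two is the factorization $I_N-\widehat A=(I_N-A)(I_N-R_1)$, which is immediate from $\widehat A=A+R_1-AR_1$; since $(I_N-R_1)\Z^N$ is exactly the zero-sum sublattice $\{w\in\Z^N:\sum_i w_i=0\}$, passing from $A$ to $\widehat A$ twists the Bowen--Franks group by the datum of the unit class, which is what distinguishes the strong group from the weak one. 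Forming the direct sum, and using that the cokernel of a block-diagonal matrix is the direct sum of the cokernels of its blocks, gives
\[\Extt^1(\OA)=\Exts^1(\OA)\oplus\Extw^1(\OA)\cong \Z^{2N}/(I_{2N}-\widehat A\oplus A)\Z^{2N}.\]

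The forward implication is then immediate: an isomorphism $\OA\cong\OB$ carries $\Extt^1(\OA)$ onto $\Extt^1(\OB)$ and hence identifies the two displayed cokernel groups. For the converse, assume the cokernel groups are isomorphic, that is $\Extt^1(\OA)\cong\Extt^1(\OB)$; to invoke Theorem \ref{MTfg} it remains to produce $\Extt^0(\OA)\cong\Extt^0(\OB)$. Here I would use the feature of Cuntz--Krieger algebras that $\K_1(\OA)\cong\Ker(I_N-A^t)$ is free abelian. Through the UCT this freeness forces the even total group $\Extt^0(\OA)$ to be free abelian with rank equal to the free rank of $\Extt^1(\OA)$ (the even groups are the $\Z$-duals of the odd ones, and $\Hom(\,\cdot\,,\Z)$ preserves free ranks while annihilating torsion). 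Consequently $\Extt^1(\OA)\cong\Extt^1(\OB)$ forces $\Extt^0(\OA)\cong\Extt^0(\OB)$, and Theorem \ref{MTfg} yields $\OA\cong\OB$. This is exactly the assertion that only $\Extt^1(\OA)$ is needed, paralleling R\o rdam's classification of $\OA$ by $(\K_0(\OA),[1_\OA]_0)$ \cite{Ro}.

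The main obstacle is the pair of structural inputs feeding the two reductions, not their assembly. The first is the identification $\Exts^1(\OA)\cong\Z^N/(I_N-\widehat A)\Z^N$, i.e.\ the verification that the twisted matrix $\widehat A$ genuinely records $[1_\OA]_0$ inside the strong extension group; the factorization $I_N-\widehat A=(I_N-A)(I_N-R_1)$ makes the mechanism transparent, but the identification itself rests on the computation in \cite{MatSogabe}. The second is the redundancy of $\Extt^0$ for Cuntz--Krieger algebras, which is precisely where the freeness of $\K_1(\OA)$ enters and where the general two-group invariant of Theorem \ref{MTfg} collapses to the single group $\Extt^1(\OA)$; for a general unital Kirchberg algebra $\Extt^0$ carries independent torsion and cannot be dropped.
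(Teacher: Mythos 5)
Your overall strategy---identify $\Extt^1(\OA)$ with the displayed cokernel, then show that for Cuntz--Krieger algebras the even-degree hypothesis of Theorem \ref{MTfg} is automatic---is the same as the paper's, and your identification of $\Extt^1(\OA)$ (including the factorization $I_N-\widehat{A}=(I_N-A)(I_N-R_1)$) is fine. However, the key step of your converse contains a genuine error: it is \emph{not} true that $\Extt^0(\OA)$ is free abelian of rank equal to the free rank of $\Extt^1(\OA)$, and the justification that ``the even groups are the $\Z$-duals of the odd ones'' fails for the strong extension groups. Indeed $\Exts^1(\OA)=\sqK(C_{\OA},\mathbb{C})$ has free rank $\rank(\K_0(C_{\OA}))$, while $\Exts^0(\OA)=\sqK(C_{\OA},S)$ has free rank $\rank(\K_1(C_{\OA}))$, and the mapping cone sequence \eqref{M} gives $\rank(\K_0(C_{\OA}))-\rank(\K_1(C_{\OA}))=1-\chi(\OA)=1$ for a Cuntz--Krieger algebra; so $\Exts^0$ is not the $\Z$-dual of $\Exts^1$, and in total $\rank(\Extt^0(\OA))=\rank(\Extt^1(\OA))-1$. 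A concrete counterexample to your claim is $\mathcal{O}_2$, realized with $A$ the $2\times 2$ all-ones matrix: there $\Extt^1(\mathcal{O}_2)\cong\Z$ while $\Extt^0(\mathcal{O}_2)=0$, not $\Z$.

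The damage is repairable. Freeness of $\Extt^0(\OA)$ does hold: freeness of $\K_1(\OA)\cong\Ker(I_N-A^t)$ kills the $\Ext_\Z^1$-term in the UCT for $\Extw^0(\OA)$, and via \eqref{M} it also forces $\K_0(C_{\OA})$ to be free (an extension of a subgroup of $\Z$ by $\K_1(\OA)$), killing the $\Ext_\Z^1$-term for $\Exts^0(\OA)$. With the corrected formula $\rank(\Extt^0)=\rank(\Extt^1)-1$, your reduction ``$\Extt^1(\OA)\cong\Extt^1(\OB)\Rightarrow\Extt^0(\OA)\cong\Extt^0(\OB)$'' still goes through, and Theorem \ref{MTfg}\,(iii) applies. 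For comparison, the paper avoids this bookkeeping by verifying condition (ii) of Theorem \ref{MTfg} instead: it computes $\Extw^0(\OA)\cong\K_1(\OA)\cong\Z^{\rank(\Exts^1(\OA))-w(\Exts^1(\OA),\iota_{\OA})}$ and then extracts both $\rank(\Exts^1(\OA))$ and $w(\Exts^1(\OA),\iota_{\OA})$ from the isomorphism class of $\Extt^1(\OA)\cong\Exts^1(\OA)\oplus\bigl(\Exts^1(\OA)/\Z\iota_{\OA}\bigr)$ using Lemma \ref{pari} and \eqref{rkg}; that route never needs to compare the ranks of $\Extt^0$ and $\Extt^1$ directly.
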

The abelian group $\Z^{2N}/(I_{2N}-\widehat{A}\oplus A)\Z^{2N}$ 
is computed by finding the Smith normal form of the $2N\times 2N$
matrix $I_{2N}-\widehat{A}\oplus A$,
which is obtained by an easy algorithm computing 
the greatest common divisers of minor determinants.
Thus, the above corollary provides a useful algorithm to distinguish between 
two Cuntz--Krieger algebras
$\OA$ and $\OB$.


\section{Preliminaries}
\subsection{Notation}
In the present paper, 
we basically consider the separable nuclear UCT C*-algebras with finitely generated K-groups.
Let $\mathbb{K}$ 
be the algebra of compact operators on the separable infinite dimensional Hilbert space,
and we write $S:=C_0(0, 1)$. 
For a unital C*-algebra $\A$,
we denote by 
$$
C_\A :=\{a(t)\in C_0(0, 1]\otimes \A \;|\; a(1)\in \mathbb{C}1_\A\}
$$ 
the mapping cone algebra of the unital map $\mathbb{C}\to \A$.
The Puppe sequence 
$S\to S\A\to C_\A\to \mathbb{C}\to \A\to SC_\A\to S$
induces the exact sequence of K-groups
\begin{align}
0\to \K_1(\A)\to \K_0(C_\A)\to \Z\to \K_0(\A)\to \K_1(C_\A)\to 0\label{M}
\end{align}
 (see \cite[Theorem 19.4.3]{Blackadar}).
We denote by 
$\mathcal{O}_n, \; n=2, 3, \dots, \infty$ 
the Cuntz algebras whose K-groups are given by (see \cite{Cuntz81})
\[
\K_0(\mathcal{O}_n)\cong \Z/(n-1)\Z \,\,   \text{ for } \,  n<\infty, \,\,
\K_0(\mathcal{O}_\infty)=\Z,\, \text{ and } \, 
\K_1(\mathcal{O}_n)=0 \, \text{ for }\,  n\leq\infty.
\]
If $\K_0(\A), \K_1(\A)$ are finitely generated,
we write 
\begin{equation}\label{eq:chi}
\chi(\A):=\rank (\K_0(\A))-\rank (\K_1(\A))
\end{equation} 
where the rank of finitely generated abelian group $G$ 
is defined by 
$\rank (G):={\rm dim}_\mathbb{Q} (G\otimes_\Z\mathbb{Q})$. 
For a finitely generated abelian group $G$ and $g\in G$,
we write
\begin{equation*}
w (G, g):=\rank (G)-\rank (G/\Z g)\in \{0, 1\}
\end{equation*}
and one can check that
\begin{equation*}
w(G, g) =
\begin{cases}
0 & \text{ if } g \in {\rm Tor}(G) \\
1 & \text{ if } g \not\in {\rm Tor}(G),
\end{cases}
\end{equation*}
where 
$\Z g$ denotes the subgroup of $G$ generated by $g$ 
and ${\rm Tor}(G)$ is the torsion part of $G$ so that 
$G\cong \Z^{\rank (G)}\oplus {\rm Tor}(G)$.
For the triples $(G_i, g_i, H_i)$ of abelian groups $G_i, H_i$
with $g_i\in G_i, \;i=1, 2$,
the isomorphism 
$(G_1, g_1, H_1)\cong (G_2, g_2, H_2)$ 
means that there exist isomorphisms 
$\varphi : G_1\to G_2, \psi : H_1\to H_2$
of abelian groups satisfying $\varphi (g_1)=g_2$.

For the $\sqK$-group $\sqK(\A, \B)$,
we denote by 
\[\hat{\otimes} : \sqK(\A, \B)\times \sqK(\B, \C)\to \sqK(\A, \C)\]
the Kasparov product and write 
$I_\A:=\sqK({\rm id}_\A)\in \sqK(\A, \A)$ 
where $\sqK(f) \in \sqK(\A, \B)$ is the Kasparov module 
$(f, \B, 0)$ for a $*$-homomorphism $f : \A\to \B$ (see \cite{Blackadar} for detail).

\subsection{$\sqK$-groups, weak and strong extension groups}
We refer to \cite{Blackadar} for the basic facts and notations related to $\sqK$-theory.
For UCT C*-algebras $\A, \B$,
we will frequently use the following short exact sequence
\[
0
\to\bigoplus_{i=0, 1}\Ext_\Z^1(\K_i(\A), \K_{i+1}(\B))
\to \sqK(\A, \B)
\to \bigoplus_{i=0, 1}\Hom (\K_i(\A), \K_i(\B))\to 0
\]
called UCT, which splits unnaturally (see \cite{Blackadar, Brown84, RS}).
We write 
$$\Extw^i(\A):=\sqK(\A, S^i), \quad i=0, 1,$$
where $S^0, S^1$ denote $\mathbb{C}, S$ respectively.
For a unital separable nuclear C*-algebra $\A$, 
it is well-known that the group $\Extw^1(\A)$ 
is identified with the weak extension group $\Extw(\A)$
consisting of the weak unitary equivalence classes of unital Busby invariants 
$\tau : \A\to\mathcal{Q}(\mathbb{K})$ 
(see \cite[Chapter 15, Proposition 15.14.2]{Blackadar}).
There is also another group consisting of the strong unitary equivalence classes of the unital Busby invariants, 
called the strong extension groups written $\Exts(\A)$.
G. Skandalis in \cite{Skandalis} 
clarifies the relation of the strong extension groups to $\sqK$-groups.
Following Skandalis' result for  unital C*-algebras,
we write 
\[\Exts^i(\A):=\sqK(C_\A, S^{i+1}), \qquad  i=0, 1,
\]
so that $\Exts^1(\A)$ is identified with $\Exts(\A)$ by \cite{Skandalis}.
For example,
the UCT shows the following isomorphisms for a unital C*-algebra $\A$ 
with finitely generated K-groups:
\begin{align*}
\Exts^1(\A)
&\cong \sqK(C_\A, \mathbb{C})\\
&\cong \Z^{\rank(\K_0(C_\A))}\oplus {\rm Tor}(\K_1(C_\A))\\
&=\Z^{\chi (C_\A)+\rank(\K_1(C_\A))}\oplus {\rm Tor}(\K_1(C_\A))\\
&=\Z^{1-\chi (\A)+\rank (\K_1(C_\A))}\oplus {\rm Tor}(\K_1(C_\A)),
\end{align*}
where the last equality follows from the exact sequence (\ref{M}).
Applying Puppe sequence 
$S\to S\A\to C_\A\to\mathbb{C}\to \A \to SC_\A \to S$
for $\sqK(-, \mathbb{C})$,
one has the cyclic six term exact sequence of extension groups 
(see \cite[Theorem 2.3]{Skandalis}):
\begin{equation}
\xymatrix{
\Z\ar[r]^{\iota_\A}&\Exts^1(\A)\ar[r]&\Extw^1(\A)\ar[d]\\
\Extw^0(\A)\ar[u]&\Exts^0(\A)\ar[l]&0,\ar[l]
}\label{E6}
\end{equation}
and we write $\iota_\A:=\iota_\A(1)\in \Exts^1(\A)$ by abuse of notation.

\section{Spanier--Whitehead $\K$-duality and reciprocal duality}
\subsection{Spanier--Whitehead $\K$-duality}
We briefly recall the Spanier--Whitehead K-duality following \cite{KS, KP}.
\begin{definition}
Let $\A, D(\A)$ be separable C*-algebras.
They are called Spanier--Whitehead K-dual if there are elements
\[\mu\in \sqK(\mathbb{C}, \A\otimes D(\A)),
\quad 
\nu\in \sqK(D(\A)\otimes \A, \mathbb{C})\]
satisfying
\[
(\mu\otimes I_\A)\hat{\otimes}(I_\A\otimes \nu)=I_\A,
\quad
(I_{D(\A)}\otimes \mu)\hat{\otimes}(\nu\otimes I_{D(\A)})=I_{D(\A)}.
\]
The pair $(\mu, \nu)$ is called the duality classes.
\end{definition}
If a C*-algebra $\A$ has a C*-algebra $D(\A)$ 
such that they are 
Spanier--Whitehead $\K$-dual,
then it is said to be dualizable.

Let $\A, \B$ be dualizable C*-algebras with their duality classes 
\begin{gather*}
\mu_\A\in \sqK(\mathbb{C}, \A\otimes D(\A)), \quad\nu_\A\in \sqK(D(\A)\otimes \A, \mathbb{C}),\\
\mu_\B\in \sqK(\mathbb{C}, \B\otimes D(\B)), \quad\nu_\B\in \sqK(D(\B)\otimes \B, \mathbb{C}),
\end{gather*}
respectively.
For the natural inclusions 
\begin{gather*}
i_\A :\A\otimes D(\A)\ni a\otimes d \mapsto 
(a\otimes 0) \oplus (d\oplus 0) \in (\A\oplus \B)\otimes (D(\A)\oplus D(\B)),\\
i_\B : \B\otimes D(\B) \ni b\otimes d \mapsto 
(0\oplus b) \otimes (0\oplus d) \in (\A\oplus \B)\otimes (D(\A)\oplus D(\B))
\end{gather*}
and the natural projections
\begin{gather*}
p_\A : (D(\A)\oplus D(\B))\otimes (\A\oplus \B)\ni (d_1\oplus d_2)\otimes (a\oplus b)  
\mapsto d_1\otimes a \in D(\A)\otimes \A,\\
p_\B : (D(\A)\oplus D(\B))\otimes (\A\oplus \B)\ni (d_1\oplus d_2)\otimes (a\oplus b)
\mapsto d_2\otimes b \in D(\B)\otimes \B,
\end{gather*}
we write
\begin{gather*}
\overline{\mu_\A}:=\mu_\A\hat{\otimes}\sqK(i_\A),\quad \overline{\nu_\A}:=\sqK(p_\A)\hat{\otimes}\nu_\A,\\
\overline{\mu_\B}:=\mu_\B\hat{\otimes}\sqK(i_\B),\quad \overline{\nu_\B}:=\sqK(p_\B)\hat{\otimes}\nu_\B.
\end{gather*}

\begin{lemma}\label{dsum}
The elements 
\begin{gather*}
\overline{\mu_\A}+\overline{\mu_\B}\in \sqK(\mathbb{C}, (\A\oplus \B)\otimes (D(\A)\oplus D(\B))),\\
\overline{\nu_\A}+\overline{\nu_\B}\in \sqK((D(\A)\oplus D(\B))\otimes (\A\oplus \B), \mathbb{C})
\end{gather*}
are duality classes.
In particular, $\A\oplus \B$ and $D(\A)\oplus D(\B)$ are Spanier--Whitehead $\K$-dual.
\end{lemma}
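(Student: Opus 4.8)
The plan is to verify the two duality identities directly, exploiting the bilinearity of the Kasparov product together with the orthogonality of the two corners of the direct sums. Throughout, write $E := \A\oplus \B$ and $F := D(\A)\oplus D(\B)$, and let $e_\A : \A\to E$, $e_\B : \B\to E$ denote the corner inclusions and $q_\A : E\to \A$, $q_\B : E\to \B$ the corner projections, so that $\sqK(q_\A)\hat{\otimes}\sqK(e_\A)+\sqK(q_\B)\hat{\otimes}\sqK(e_\B)=I_E$. I must check that $\overline{\mu_\A}+\overline{\mu_\B}$ and $\overline{\nu_\A}+\overline{\nu_\B}$ satisfy both equations in the definition of duality classes; by symmetry I describe only the first, namely $((\overline{\mu_\A}+\overline{\mu_\B})\otimes I_E)\hat{\otimes}(I_E\otimes(\overline{\nu_\A}+\overline{\nu_\B}))=I_E$.

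First I would expand the left-hand side by bilinearity of $\hat{\otimes}$ into four summands indexed by $(X,Y)\in\{\A,\B\}^2$, the $(X,Y)$-term being $(\overline{\mu_X}\otimes I_E)\hat{\otimes}(I_E\otimes\overline{\nu_Y})$. Using the definitions $\overline{\mu_X}=\mu_X\hat{\otimes}\sqK(i_X)$ and $\overline{\nu_Y}=\sqK(p_Y)\hat{\otimes}\nu_Y$, together with the interchange law $(\alpha\otimes I)\hat{\otimes}(I\otimes\beta)=\alpha\otimes\beta=(I\otimes\beta)\hat{\otimes}(\alpha\otimes I)$ relating external and Kasparov products, each term factors through the middle class $\sqK(i_X\otimes\id_E)\hat{\otimes}\sqK(\id_E\otimes p_Y)=\sqK((\id_E\otimes p_Y)\circ(i_X\otimes\id_E))$.

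The key observation is that this middle $*$-homomorphism is computed on elementary tensors by pushing the first argument into the $X$-corner of $E$ and the second into the $X$-corner of $F$, and then projecting the $F$-factor onto its $Y$-corner. When $X\neq Y$ this projection annihilates the $X$-corner, so the composition is the zero homomorphism and the whole off-diagonal term vanishes. When $X=Y$ the composition is $a\otimes d\otimes x\mapsto e_X(a)\otimes d\otimes q_X(x)$; sliding $\sqK(q_X)$ to the far left and $\sqK(e_X)$ to the far right (again by naturality and the interchange of external and Kasparov products) rewrites the diagonal term as $\sqK(q_X)\hat{\otimes}\big[(\mu_X\otimes I_X)\hat{\otimes}(I_X\otimes\nu_X)\big]\hat{\otimes}\sqK(e_X)$. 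The bracket is exactly the original first duality equation for $\A$ or $\B$, hence equals $I_X$, and the diagonal term collapses to $\sqK(q_X)\hat{\otimes}\sqK(e_X)$. Summing over $X\in\{\A,\B\}$ then gives $I_E$ as required, and the second duality identity follows by the same argument with the roles of $\mu$ and $\nu$, and of $E$ and $F$, interchanged.

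I expect the main obstacle to be purely bookkeeping: keeping track of the ordering of the three tensor factors through each application of the interchange law and the associativity of $\hat{\otimes}$, so that the flip implicit in passing between $E\otimes F$ and $F\otimes E$ is handled correctly. The conceptual content, namely the vanishing of the cross terms by corner-orthogonality and the reduction of the diagonal terms to the given dualities, is immediate once this factorization is set up.
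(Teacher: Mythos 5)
Your proposal is correct and follows essentially the same route as the paper's proof: expand the product bilinearly into four terms indexed by $(X,Y)\in\{\A,\B\}^2$, observe that the cross terms vanish because the middle $*$-homomorphism factors through $q_{D(Y)}\circ e_{D(X)}=0$ when $X\neq Y$, and collapse each diagonal term to $\sqK(q_X)\hat{\otimes}\sqK(e_X)$ by sliding the corner maps outward and invoking the duality identity for $X$. The only point of divergence is the final step, which you place in your setup as an assumption: the identity $\sqK(q_\A)\hat{\otimes}\sqK(e_\A)+\sqK(q_\B)\hat{\otimes}\sqK(e_\B)=I_{\A\oplus\B}$ is precisely what the paper does not take for granted but proves, by noting that the sum of the classes of $e_\A\circ q_\A$ and $e_\B\circ q_\B$ is the class of the block-diagonal homomorphism
\[
\phi_0 : \A\oplus \B\ni (a,b)\mapsto
\begin{bmatrix}
(a,0)&0\\
0&(0,b)
\end{bmatrix}
\in \mathbb{M}_2(\A\oplus\B),
\]
which is homotopic to $\phi_1:(a,b)\mapsto\begin{bmatrix}(a,b)&0\\0&0\end{bmatrix}$, whose class is $I_{\A\oplus\B}$. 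Your assumption is a standard consequence of the additivity (split-exactness) of $\sqK$ for finite direct sums, so this is a citable fact rather than a gap in the argument; but to make the proof self-contained you should either cite that additivity explicitly or include the rotation-homotopy argument above.
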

\begin{proof}
We prove the equality
\[((\overline{\mu_\A}+\overline{\mu_\B})\otimes I_{\A\oplus \B})
\hat{\otimes}
(I_{\A\oplus \B}\otimes(\overline{\nu_\A}+\overline{\nu_\B}))
=I_{\A\oplus \B}.\]
Let $j_\A : \A\to \A\oplus \B$ and $q_\A : \A\oplus \B\to \A$ 
denote the natural inclusion and projection respectively.
The direct computation yields
\begin{align*}
&(\overline{\mu_\A}\otimes I_{\A\oplus \B})\hat{\otimes}(I_{\A\oplus \B}\otimes \overline{\nu_\A})\\
=&(\mu_\A\otimes I_{\A\oplus \B})
\hat{\otimes}\sqK(\id_{\A\oplus \B}\otimes p_\A\circ i_\A\otimes\id_{\A\oplus \B})\hat{\otimes}(I_{\A\oplus \B}\otimes \nu_\A)\\
=&(\mu_\A\otimes I_{\A\oplus \B})\hat{\otimes}\sqK(j_\A\otimes\id_{D(\A)}\otimes q_\A)\hat{\otimes}(I_{\A\oplus \B}\otimes \nu_\A)\\
=&\sqK(q_\A)\hat{\otimes}(\mu_\A\otimes I_\A)\hat{\otimes}(I_\A\otimes \nu_\A)\hat{\otimes}\sqK(j_\A)\\
=&\sqK(j_\A\circ q_\A),
\end{align*}
and the same computation shows
\[
(\overline{\mu_\B}\otimes I_{\A\oplus \B})
\hat{\otimes}
(I_{\A\oplus \B}\otimes \overline{\nu_\B})=\sqK(j_\B\circ q_\B)
\]
for the natural inclusion $j_\B : \B\to \A\oplus \B$ 
and projection $q_\B : \A\oplus \B\to \B$.
Since the composition
\[
(\id_{\A\oplus \B}\otimes p_\A)\circ(i_\B\otimes \id_{\A\oplus \B})=j_B\otimes (q_{D(\A)}\circ j_{D(\B)})\otimes q_A\]
is the zero map,
one has 
$
(\overline{\mu_\B}\otimes I_{\A\oplus \B})\hat{\otimes}(I_{\A\oplus \B}\overline{\nu_\A})=0,
$
and similarly 
$
(\overline{\mu_\A}\otimes I_{\A\oplus \B})\hat{\otimes}(I_{\A\oplus \B}\otimes\overline{\nu_\B})=0.
$
Let $\phi_0$ be the map
\[\phi_0 : \A\oplus \B\ni (a, b)\mapsto 
\begin{bmatrix}
(a, 0)&0\\
0&(0, b)
\end{bmatrix}
\in \mathbb{M}_2(\A\oplus \B)\]
which is homotopic to
\[\phi_1 : \A\oplus \B\ni (a, b)\mapsto 
\begin{bmatrix}
(a, b)&0\\
0&0
\end{bmatrix}\in \mathbb{M}_2(\A\oplus \B).
\]
We thus have
\begin{align*}
&((\overline{\mu_\A}+\overline{\mu_\B})\otimes I_{\A\oplus \B})
\hat{\otimes}(I_{\A\oplus \B}\otimes(\overline{\nu_\A}+\overline{\nu_\B}))\\
=&\sqK(j_\A\circ q_\A)+0+0+ \sqK(j_\B\circ q_\B)\\
=&\sqK(\phi_0)=\sqK(\phi_1)=I_{\A\oplus \B},
\end{align*}
The same argument shows the other equality
\[(I_{D(\A)\oplus D(\B)}\otimes(\overline{\mu_\A}+\overline{\mu_\B}))
\hat{\otimes}((\overline{\nu_\A}+\overline{\nu_\B})\otimes I_{D(\A)\oplus D(\B)})
=I_{D(\A)\oplus D(\B)}.\]
\end{proof}
\begin{corollary}\label{cor:dsum}
Every separable UCT C*-algebra $\A$ with finitely generated K-groups is 
$\sqK$-equivalent to a finite direct sum of the following dualizable C*-algebras
\[\mathbb{C}, \quad S,\quad  \mathcal{O}_n,\quad  S\mathcal{O}_n, \; n=2, 3, \dots,\]
and hence it is dualizable by the above lemma (see also \cite{KP, KS, PennigSogabe}).
\end{corollary}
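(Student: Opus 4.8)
The plan is to reduce the statement to two facts: first, that inside the UCT class a C*-algebra is determined up to $\sqK$-equivalence by its K-theory; and second, that the listed building blocks realize every finitely generated abelian group in each of the two degrees. Granting these, I would build an explicit finite direct sum $\mathcal{D}$ of building blocks with $\K_*(\mathcal{D})\cong\K_*(\A)$, upgrade this K-theory isomorphism to an invertible class in $\sqK(\A,\mathcal{D})$ via the UCT, and finally deduce dualizability of $\A$ from that of $\mathcal{D}$ together with Lemma \ref{dsum}.

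First I would invoke the structure theorem for finitely generated abelian groups to write
\[\K_0(\A)\cong \Z^{p}\oplus\bigoplus_{j}\Z/m_j\Z,\qquad \K_1(\A)\cong \Z^{q}\oplus\bigoplus_{k}\Z/n_k\Z,\]
with each $m_j,n_k\geq 2$. Using the K-groups recorded in Section 2, namely $\K_0(\mathbb{C})=\Z$ and $\K_0(\mathcal{O}_{m+1})=\Z/m\Z$, together with the suspension isomorphisms giving $\K_0(S)=0,\ \K_1(S)=\Z$ and $\K_0(S\mathcal{O}_{m+1})=0,\ \K_1(S\mathcal{O}_{m+1})=\Z/m\Z$, I set
\[\mathcal{D}:=\mathbb{C}^{\oplus p}\oplus\Big(\bigoplus_{j}\mathcal{O}_{m_j+1}\Big)\oplus S^{\oplus q}\oplus\Big(\bigoplus_{k}S\mathcal{O}_{n_k+1}\Big).\]
Since K-theory is additive over finite direct sums and each summand contributes to exactly one degree, one checks directly that $\K_0(\mathcal{D})\cong\K_0(\A)$ and $\K_1(\mathcal{D})\cong\K_1(\A)$ as abelian groups.

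Next I would promote this pair of isomorphisms, which is an element of $\bigoplus_i\Hom(\K_i(\A),\K_i(\mathcal{D}))$, to a class $\alpha\in\sqK(\A,\mathcal{D})$ by surjectivity in the UCT sequence recalled in Section 2.2. A class inducing an isomorphism on K-theory between UCT C*-algebras is a $\sqK$-equivalence (Rosenberg--Schochet, \cite{RS, Blackadar}), so $\A$ and $\mathcal{D}$ are $\sqK$-equivalent. Finally, each of $\mathbb{C}, S, \mathcal{O}_n, S\mathcal{O}_n$ is dualizable (\cite{KP, KS, PennigSogabe}), so Lemma \ref{dsum}, applied inductively to the finite direct sum, shows $\mathcal{D}$ is dualizable with duality classes $\mu_{\mathcal{D}},\nu_{\mathcal{D}}$; composing these with $\alpha$ and its Kasparov inverse transports the duality to $\A$, so $\A$ is dualizable.

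The K-theory bookkeeping in the construction of $\mathcal{D}$ is routine, as is the transport of duality along $\alpha$. The one place where genuine care is needed is the passage from a K-theory isomorphism to an \emph{invertible} $\sqK$-class: the UCT only lifts the $\Hom$-part, and the $\Ext^1_\Z$-term is an a priori obstruction to invertibility. The resolution is that the kernel of the map $\sqK(\A,\A)\to\End(\K_*(\A))$ is a square-zero ideal, so if $\alpha\hat{\otimes}\beta$ maps to $\id$ it differs from $I_\A$ by a nilpotent element and is therefore invertible; the symmetric argument for $\beta\hat{\otimes}\alpha$ then makes $\alpha$ a two-sided $\sqK$-equivalence. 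I would cite this standard consequence of the UCT rather than reprove it.
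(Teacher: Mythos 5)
Your proof is correct and fills in, with the standard details, exactly the argument the paper leaves implicit: realize $\K_*(\A)$ by a direct sum of the building blocks via the structure theorem, lift the K-theory isomorphism through the UCT to a $\sqK$-equivalence (the square-zero-ideal argument you cite is the standard Rosenberg--Schochet fact), and then apply Lemma \ref{dsum} inductively and transport the duality classes along the equivalence. The paper itself offers no written proof, deferring to \cite{KP, KS, PennigSogabe}, so your proposal is a faithful expansion of the intended reasoning rather than a different route.
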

Note that duals of 
$\mathbb{C}, S, \mathcal{O}_n, S\mathcal{O}_n$ are given by
$\mathbb{C}, S, S\mathcal{O}_n, \mathcal{O}_n$, respectively 
(see \cite{KP, KS, PennigSogabe}).
Lemma \ref{dsum} and Corollary \ref{cor:dsum}
show that a dual $D(\A)$
of a C*-algebra $\A$ $\sqK$-equivalent to
$\mathbb{C}^a\oplus S^b\oplus \mathcal{O}_n^c\oplus (S\mathcal{O}_m)^d\oplus\cdots$ is given by
\[D(\A)=\mathbb{C}^a\oplus S^b\oplus (S\mathcal{O}_n)^c\oplus \mathcal{O}_m^d\oplus\cdots,\]
and one has
\[D(D(\A))\sim_{\sqK} \A,\quad \sqK(\A, S^i)\cong \sqK(S^i, D(\A)),\]
\[
\K_0(D(\A))=\Z^{\rank(\K_0(\A))}\oplus {\rm Tor}(\K_1(\A)),\quad \K_1(D(\A))
=\Z^{\rank(\K_1(\A))}\oplus {\rm Tor}(\K_0(\A)).
\]

In topology, the Euler characteristic is given by
a specific cohomology class called the Euler class.
The following proposition shows that the number $\chi(\A)$
defined by \eqref{eq:chi} is given by a $\sqK$-class.
\begin{proposition}
Let $\A$ be a UCT C*-algebra with finitely generated K-groups.
For the duality classes 
$\mu\in \sqK(\mathbb{C}, \A\otimes D(\A)), \nu\in \sqK(D(\A)\otimes \A, \mathbb{C})$,
and the flip isomorphism 
$\sigma_{\A, D(\A)} : \A\otimes D(\A)\to D(\A)\otimes \A$
defined by
$\sigma_{\A, D(\A)}(a\otimes d) = d\otimes a$
for
$a \otimes d \in \A\otimes D(\A)$,
we have 
\[
\chi (\A)
=\mu\hat{\otimes}\sqK(\sigma_{\A, D(\A)})\hat{\otimes}\nu
\in \sqK(\mathbb{C}, \mathbb{C}) =\Z.
\]
\end{proposition}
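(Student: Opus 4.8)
The plan is to recognise the element $\mu\hat{\otimes}\sqK(\sigma_{\A, D(\A)})\hat{\otimes}\nu$ as the categorical trace of the identity morphism $I_\A$ in the symmetric monoidal category $KK$ with tensor unit $\mathbb{C}$, that is, as the \emph{dimension} of the dualizable object $\A$. As with the trace of an endomorphism of a finite-dimensional vector space, this quantity is independent of the chosen duality classes and is preserved by any $\sqK$-equivalence, since such an equivalence is exactly an isomorphism in $KK$ and the dimension of a dualizable object depends only on its isomorphism class. It is moreover additive for finite direct sums: using the explicit duality classes $\overline{\mu_\A}+\overline{\mu_\B}$ and $\overline{\nu_\A}+\overline{\nu_\B}$ of Lemma \ref{dsum}, the very same vanishing of cross terms established in that proof shows that the dimension of $\A\oplus\B$ is the sum of those of $\A$ and $\B$. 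Since $\chi$ is likewise $\sqK$-invariant and additive, Corollary \ref{cor:dsum} lets one reduce the identity to the building blocks $\mathbb{C}, S, \mathcal{O}_n, S\mathcal{O}_n$; I would, however, run the actual computation through rationalisation, which handles all cases at once.

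The key device is the rationalised Chern character. The assignment $F : \A\mapsto (\K_0(\A)\otimes_\Z\mathbb{Q})\oplus(\K_1(\A)\otimes_\Z\mathbb{Q})$, graded with $\K_0$ in even and $\K_1$ in odd degree, is a symmetric monoidal functor from $KK$ to the category of $\Z/2$-graded $\mathbb{Q}$-vector spaces: it sends $\mathbb{C}$ to $\mathbb{Q}$ concentrated in even degree, and by the K\"unneth theorem (the $\Tor$ terms die after $\otimes_\Z\mathbb{Q}$) it sends $\otimes$ to the graded tensor product. The crucial point is that $F$ carries the flip $\sqK(\sigma_{\A, D(\A)})$ to the graded flip obeying the Koszul sign rule, because the external K-theory product is graded commutative. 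Granting this, $F$ transports $\mu\hat{\otimes}\sqK(\sigma_{\A, D(\A)})\hat{\otimes}\nu$ to the categorical trace of the identity in graded $\mathbb{Q}$-vector spaces, namely the super-dimension $\dim(\K_0(\A)\otimes\mathbb{Q})-\dim(\K_1(\A)\otimes\mathbb{Q})=\rank(\K_0(\A))-\rank(\K_1(\A))=\chi(\A)$. As $F$ restricts on $\sqK(\mathbb{C},\mathbb{C})=\Z$ to the inclusion $\Z\hookrightarrow\mathbb{Q}$ sending $I_\mathbb{C}$ to $1$, its injectivity forces the integer $\mu\hat{\otimes}\sqK(\sigma_{\A, D(\A)})\hat{\otimes}\nu$ to equal $\chi(\A)$.

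The main obstacle is precisely this Koszul sign, i.e. justifying that interchanging two odd-degree factors introduces a $-1$; this is what separates $\chi$ from the ordinary total rank. Concretely it is forced by the block $S$: here $D(S)=S$, the duality classes are the Bott element $\mu_S\in\sqK(\mathbb{C}, S\otimes S)\cong\sqK(\mathbb{C}, C_0(\R^2))\cong\Z$ and the Dirac element $\nu_S$, normalised by Bott periodicity so that $\mu_S\hat{\otimes}\nu_S=1$, while the flip $\sigma_{S,S}$ is induced by the coordinate swap of $\R^2$, which is orientation reversing and hence acts by $-1$ on $\sqK(\mathbb{C}, C_0(\R^2))\cong\Z$. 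Thus $\mu_S\hat{\otimes}\sqK(\sigma_{S,S})=-\mu_S$, and the dimension of $S$ is $-1=\chi(S)$. The remaining blocks are immediate: for $\mathbb{C}$ the duality classes and the flip are the identity, giving $1=\chi(\mathbb{C})$, whereas $F(\mathcal{O}_n)=F(S\mathcal{O}_n)=0$ makes the dimension vanish, matching $\chi(\mathcal{O}_n)=\chi(S\mathcal{O}_n)=0$. I expect the only genuinely delicate part to be the bookkeeping of sign conventions in the identification $S\otimes S\cong C_0(\R^2)$ and in the Bott--Dirac pairing, so that this orientation-reversal sign is pinned down unambiguously.
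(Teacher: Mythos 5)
Your argument is correct, but it takes a genuinely different route from the paper's. The paper proceeds by explicit reduction to building blocks: it verifies the identity directly for $\mathbb{C}$, for $S$ (using $\sqK(\sigma_{S,S})=-I_{S^2}$, the same orientation-reversal sign you isolate), and for $\mathcal{O}_n, S\mathcal{O}_n$ (where the pairing factors through the torsion group $\sqK(\mathbb{C},\mathcal{O}_n\otimes S\mathcal{O}_n)\cong\Z/(n-1)\Z$ and hence vanishes); it then proves that the identity is stable under direct sums using the duality classes of Lemma \ref{dsum}, and concludes via Corollary \ref{cor:dsum}. You instead read $\mu\hat{\otimes}\sqK(\sigma_{\A,D(\A)})\hat{\otimes}\nu$ as the categorical dimension of $\A$ in the symmetric monoidal category $\sqK$ and transport it through rationalized K-theory, viewed as a symmetric monoidal functor to $\Z/2$-graded $\mathbb{Q}$-vector spaces, where the dimension of $F(\A)$ is the super-dimension $\rank(\K_0(\A))-\rank(\K_1(\A))=\chi(\A)$; injectivity of $\Z=\sqK(\mathbb{C},\mathbb{C})\hookrightarrow\mathbb{Q}$ finishes the proof. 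Both arguments ultimately rest on the same computational input --- the flip on $S\otimes S$ acts by $-1$, equivalently the external product is graded commutative --- but the scaffolding differs: the paper's route is elementary and never needs to know that the K\"unneth isomorphism is compatible with the symmetries including Koszul signs, which is exactly the point your proof must justify (it is standard for UCT algebras, and your reduction of it to the block $S$ is the right one). In exchange, your route eliminates the case analysis and the direct-sum decomposition from the main argument, and it supplies for free two points the paper uses implicitly when passing from the building blocks to a general $\A$ via $\sqK$-equivalence: that the quantity $\mu\hat{\otimes}\sqK(\sigma_{\A,D(\A)})\hat{\otimes}\nu$ does not depend on the choice of duality classes, and that it is a $\sqK$-invariant --- both automatic for a categorical trace, neither spelled out in the paper.
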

\begin{proof}
If $\A$ is one of the following building blocks
\[\mathbb{C},\; S,\; \mathcal{O}_n,\; S\mathcal{O}_n,\;\; n=2,3, \dots,\]
their duals are given by 
\[\mathbb{C},\; S,\; S\mathcal{O}_n,\; \mathcal{O}_n,\;\; n=2,3, \dots.\]
Let 
$\beta\in \sqK(\mathbb{C}, S\otimes S)$ 
be the Bott element (cf. \cite[19.2.5]{Blackadar}).
For $\A=\mathbb{C}$,
the assertion holds obviously.
For $\A=S$,
 the pair $(\beta, \beta^{-1})$ gives  duality classes such that 
 \begin{align*}
\beta\hat{\otimes}\sqK(\sigma_{S, S})\hat{\otimes}\beta^{-1}
=\beta\hat{\otimes}(-I_{S^2})\hat{\otimes}\beta^{-1}
=-1=\chi (S).
\end{align*}
For $\A=\mathcal{O}_n, S\mathcal{O}_n$,
the composition $\mu\hat{\otimes}\sqK(\sigma_{\A, D(\A)})\hat{\otimes}\nu$ 
lies in the image of the group homomorphism
\[\sqK(\mathbb{C}, \mathcal{O}_n\otimes (S\mathcal{O}_n))
\xrightarrow{\hat{\otimes}\nu}
\sqK(\mathbb{C}, \mathbb{C})=\Z\]
which is the zero map because 
$\sqK(\mathbb{C}, \mathcal{O}_n\otimes (S\mathcal{O}_n))\cong \Z/(n-1)\Z$.
Hence one has 
$\mu\hat{\otimes}\sqK(\sigma_{\A, D(\A)})\hat{\otimes}\nu
=0=\chi(\A)$.

Now let $\A, \B$ be C*-algebras with finitely generated $\K$-groups
such that
\[
\chi(\A)=\mu_\A\hat{\otimes}\sqK(\sigma_{\A, D(\A)})\hat{\otimes}\nu_\A, 
\quad 
\chi(\B)=\mu_\B\hat{\otimes}\sqK(\sigma_{\B, D(\B)})\hat{\otimes}\nu_\B
\]
holds for the duality classes $(\mu_\A, \nu_\A),\; (\mu_\B, \nu_\B)$ (i.e., $\A, \B$ satisfy the assertion of the proposition).
As in Lemma \ref{dsum},
the pair 
$(\overline{\mu_\A}+\overline{\mu_\B}, \overline{\nu_\A}+\overline{\nu_\B})$
gives 
duality classes for 
$\A\oplus \B$ and $D(\A)\oplus D(\B)$ 
such that 
\begin{align*}
&(\overline{\mu_\A}+\overline{\mu_\B})\hat{\otimes}\sqK(\sigma_{(\A\oplus \B), (D(\A)\oplus D(\B))})
\hat{\otimes}(\overline{\nu_\A}+\overline{\nu_\B}) \\
=&\mu_\A\hat{\otimes}\sqK(\sigma_{\A, D(\A)})\hat{\otimes}\nu_\A
+\mu_\B\hat{\otimes}\sqK(\sigma_{\B, D(\B)})\hat{\otimes}\nu_\B.\\
=&\chi (\A)+\chi (\B)\\
=&\chi (\A\oplus \B).
\end{align*}
Thus,  the direct sum $\A\oplus \B$ 
also satisfies the assertion of the proposition.
By Corollary \ref{cor:dsum},
we have the desired assertion.
\end{proof}

\subsection{Reciprocal duality}\label{RK}
We recall some basic properties of the reciprocality introduced in \cite{Sogabe2022}.
\begin{definition}
Let $\A, \B$ 
be unital separable UCT C*-algebras with finitely generated K-groups.
They are said to be reciprocal if the following $\sqK$-equivalences hold:
\[D(C_\A)\sim_{\sqK}\B,\quad D(C_{\B})\sim_{\sqK} \A.\]
In this case, we say that $\B$ is reciprocal to $\A$, or 
$\A$ is reciprocal to $\B$.
\end{definition}
\begin{proposition}[{\cite[Theorem 1.2, Remark 1.6, Section 3.3]{Sogabe2022}}]
For every unital Kirchberg algebra $\A$ 
with finitely generated K-groups,
there is a unique  unital Kirchberg algebra $\widehat{\A}$ up to isomorphism
which is reciprocal to $\A$.
In particular, the reciprocal Kirchberg algebras satisfy 
$(\widehat{\widehat{\A}})\cong \A$.
\end{proposition}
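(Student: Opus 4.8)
The plan is to reduce everything to K-theory and then read the reciprocal off from the extension groups of $\A$. The Kirchberg--Phillips classification theorem identifies unital Kirchberg algebras in the UCT class with their pointed triples $(\K_0,[1],\K_1)$, realizes every such triple with countable groups, and tells us that a $\sqK$-equivalence between two UCT algebras is the same as an isomorphism of $\Z/2$-graded K-groups; so it suffices to produce the correct triple. The candidate is dictated by the Spanier--Whitehead duality isomorphisms $\sqK(\A,S^i)\cong\sqK(S^i,D(\A))$ of Section \ref{RK}: applied to $C_\A$ and to $\A$ they give, using Bott periodicity, $\Exts^1(\A)\cong\K_0(D(C_\A))$, $\Exts^0(\A)\cong\K_1(D(C_\A))$, and $\Extw^0(\A)\cong\K_0(D(\A))$, $\Extw^1(\A)\cong\K_1(D(\A))$. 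I would therefore \emph{define} $\widehat{\A}$ to be the unital Kirchberg algebra with $(\K_0(\widehat{\A}),[1_{\widehat{\A}}],\K_1(\widehat{\A}))=(\Exts^1(\A),\iota_\A,\Exts^0(\A))$, where $\iota_\A\in\Exts^1(\A)$ is the distinguished class coming from \eqref{E6}.

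For existence I would verify the two defining $\sqK$-equivalences. The first, $D(C_\A)\sim_\sqK\widehat{\A}$, is immediate: $C_\A$ has finitely generated K-groups by \eqref{M}, hence is dualizable by Corollary \ref{cor:dsum}, and the duality isomorphisms above give $\K_*(\widehat{\A})\cong\K_*(D(C_\A))$ as graded groups. For the second, $D(C_{\widehat{\A}})\sim_\sqK\A$, I would feed the triple of $\widehat{\A}$ into \eqref{M}: one gets $\K_1(C_{\widehat{\A}})\cong\K_0(\widehat{\A})/\Z[1_{\widehat{\A}}]=\Exts^1(\A)/\Z\iota_\A$, and exactness of \eqref{E6} at $\Exts^1(\A)$ identifies this cokernel with $\Extw^1(\A)$, whose torsion is ${\rm Tor}(\K_1(D(\A)))={\rm Tor}(\K_0(\A))$; dually $\K_0(C_{\widehat{\A}})\cong\Exts^0(\A)\oplus\ker(\iota_\A)$. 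Inserting these into the dual-K-group formulas of Section \ref{RK}, together with a routine rank count (controlled by $\chi(\widehat{\A})=1-\chi(\A)$), then yields $\K_0(D(C_{\widehat{\A}}))\cong\Z^{\rank\K_0(\A)}\oplus{\rm Tor}(\K_0(\A))\cong\K_0(\A)$ and $\K_1(D(C_{\widehat{\A}}))\cong\K_1(\A)$. The point that must be checked with care is that $\iota_\A$ is exactly the unit class making both the torsion and the position of the unit come out right; this is where the six-term sequence \eqref{E6} is doing the real work.

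Uniqueness is then a classification statement. If $\B$ is any unital Kirchberg algebra reciprocal to $\A$, the equivalence $D(C_\A)\sim_\sqK\B$ fixes $\K_*(\B)\cong(\Exts^1(\A),\Exts^0(\A))$, while $D(C_\B)\sim_\sqK\A$ forces, by running the same computation backwards through \eqref{M}, both $w(\K_0(\B),[1_\B])=1-w(\K_0(\A),[1_\A])$ and ${\rm Tor}(\K_0(\B)/\Z[1_\B])\cong{\rm Tor}(\K_0(\A))$. The main obstacle is to show that these data pin down the $\Aut(\K_0(\B))$-orbit of the unit class $[1_\B]$, for then the pointed triple of $\B$ agrees with that of $\widehat{\A}$ and $\B\cong\widehat{\A}$ by classification; this orbit-rigidity for finitely generated abelian groups is precisely the content of the reciprocal duality of \cite[Section 3.3]{Sogabe2022}.

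Finally, $\widehat{\widehat{\A}}\cong\A$ is formal. The defining conditions $D(C_\A)\sim_\sqK\B$ and $D(C_\B)\sim_\sqK\A$ are symmetric in $\A$ and $\B$, so $\A$ is itself a unital Kirchberg algebra reciprocal to $\widehat{\A}$; the uniqueness just established, applied to $\widehat{\A}$, then gives $\widehat{\widehat{\A}}\cong\A$.
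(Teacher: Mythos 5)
Your proposal is correct. Note, however, that the paper does not actually prove this proposition: it is imported wholesale from \cite[Theorem 1.2, Remark 1.6, Section 3.3]{Sogabe2022}, so there is no internal proof to compare against line by line. What you have written is a legitimate self-contained reconstruction, and it uses exactly the toolkit that the paper itself assembles around this statement: Kirchberg--Phillips classification to realize and to compare pointed triples $(\K_0,[1],\K_1)$, the UCT to convert graded K-group isomorphisms into $\sqK$-equivalences, the Spanier--Whitehead duality formulas $\sqK(\A,S^i)\cong\sqK(S^i,D(\A))$ and the K-groups of $D(\A)$ from Section 3.1, the sequences \eqref{M} and \eqref{E6} for the bookkeeping, and, crucially, the orbit-rigidity statement for finitely generated abelian groups ($G\cong H$ and $G/\Z g\cong H/\Z h$ imply $(G,g)\cong(H,h)$), which the paper records as Lemma \ref{star}(iii) and applies in the Remark following Proposition \ref{dgi} in precisely the way you apply it. Two small points of attribution and care: the orbit rigidity you invoke is \cite[Proposition 1.5, Appendix]{Sogabe2022} rather than ``the reciprocal duality of Section 3.3''; and your rank counts (e.g.\ $\rank\K_0(C_{\widehat\A})=\rank\K_0(\A)$, $w(\widehat\A)=1-w(\A)$, ${\rm Tor}(\K_1(C_{\widehat\A}))\cong{\rm Tor}(\K_0(\A))$), while stated somewhat telegraphically, do all check out against \eqref{M}, \eqref{E6} and the UCT, so the existence half and the uniqueness half both close. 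The final deduction of $\widehat{\widehat\A}\cong\A$ from symmetry of the definition plus uniqueness is exactly right and is the standard way this is argued.
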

For a separable unital  C*-algebra $\A$ with finitely generated $\K$-groups,
we write
$w(\A) := w(\K_0(\A), [1_\A]_0).$
\begin{lemma}\label{lem:wchi}
Let
$\A$ 
be a unital Kirchberg algebra with finitely generated K-groups.
\begin{enumerate}
\renewcommand{\theenumi}{(\roman{enumi})}
\renewcommand{\labelenumi}{\textup{\theenumi}}
\item
$w(\A) + w(\whatA) =1.$
\item
$\chi(\A) + \chi(\whatA) =1.$
\end{enumerate}
\end{lemma}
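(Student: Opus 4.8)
The plan is to reduce both statements to rank bookkeeping, exploiting the defining equivalence $\whatA\sim_{\sqK}D(C_\A)$ together with the dual rank identities $\rank\K_i(D(\C))=\rank\K_i(\C)$ recorded after Corollary \ref{cor:dsum}. Write $r_i(\C):=\rank\K_i(\C)$ for brevity. For part (ii) I would first observe that these dual identities give $r_i(\whatA)=r_i(C_\A)$, hence $\chi(\whatA)=\chi(C_\A)$. Since $-\otimes_\Z\mathbb{Q}$ is exact, ranks add along the exact sequence (\ref{M}), yielding $r_1(\A)-r_0(C_\A)+1-r_0(\A)+r_1(C_\A)=0$, that is $\chi(C_\A)=1-\chi(\A)$. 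Combining the two gives $\chi(\whatA)=1-\chi(\A)$, which is (ii).

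The heart of part (i) is the single non-formal input, namely the identification of the boundary map $\Z\to\K_0(\A)$ in (\ref{M}) with the unital inclusion $1\mapsto[1_\A]_0$; this is precisely the index map of the mapping-cone extension $0\to S\A\to C_\A\to\mathbb{C}\to0$. Granting this, exactness at $\K_0(\A)$ together with surjectivity onto $\K_1(C_\A)$ identifies $\K_1(C_\A)\cong\K_0(\A)/\Z[1_\A]_0$, so by the very definition of $w$ one obtains $r_1(C_\A)=r_0(\A)-w(\A)$. Passing through $r_1(\whatA)=r_1(C_\A)$ then produces the relation
\begin{equation*}
r_1(\widehat{\B})=r_0(\B)-w(\B),
\end{equation*}
valid for every unital Kirchberg algebra $\B$ with finitely generated K-groups.

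Finally I would apply this relation twice, to $\B=\A$ and to $\B=\whatA$, using $\widehat{\whatA}\cong\A$ from the Proposition in Section \ref{RK} to rewrite $r_1(\widehat{\whatA})=r_1(\A)$. The two instances read $r_1(\whatA)=r_0(\A)-w(\A)$ and $r_1(\A)=r_0(\whatA)-w(\whatA)$; adding them and regrouping the ranks gives $\chi(\A)+\chi(\whatA)=w(\A)+w(\whatA)$, whose left-hand side equals $1$ by part (ii). Hence $w(\A)+w(\whatA)=1$, which is (i). The main obstacle is exactly the mapping-cone bookkeeping of the second paragraph: once the boundary map in (\ref{M}) is pinned to the unit class $[1_\A]_0$, the remainder is exact-sequence rank arithmetic requiring no case distinction between torsion and non-torsion units.
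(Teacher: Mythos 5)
Your proof is correct and rests on exactly the same ingredients as the paper's: the rank count along the mapping-cone sequence (\ref{M}) (with the map $\Z\to\K_0(\A)$ identified with $1\mapsto[1_\A]_0$, so that $\K_1(C_\A)\cong\K_0(\A)/\Z[1_\A]_0$), the dual rank identities $\rank\K_i(D(\C))=\rank\K_i(\C)$, and the reciprocality equivalence $\whatA\sim_{\sqK}D(C_\A)$. The only difference is organizational: the paper proves (i) by the direct chain $w(\whatA)=\rank\K_0(D(C_\A))-\rank\K_1(D(\A))=\rank\K_0(C_\A)-\rank\K_1(\A)=1-w(\A)$, using additionally $C_{\whatA}\sim_{\sqK}D(\A)$, whereas you derive the single relation $\rank\K_1(\widehat{\B})=\rank\K_0(\B)-w(\B)$ and combine its two instances (for $\B=\A$ and $\B=\whatA$, via $\widehat{\whatA}\cong\A$) with part (ii) --- a harmless reshuffling of the same rank arithmetic.
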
 
\begin{proof}
(i) We have
\begin{align*}
 w(\K_0(\widehat{\A}), [1_{\widehat{\A}}]_0)
=&\rank(\K_0(\widehat{\A}))-\rank(\K_1(C_{\widehat{\A}}))\\
=&\rank(\K_0(D(C_\A)))-\rank(\K_1(D(\A))\\
=&\rank(\K_0(C_\A))-\rank(\K_1(\A))\\
=&1-(\rank(\K_0(\A))-\rank(\K_1(C_\A)))\\
=&1-w(\K_0(\A), [1_\A]_0) 
\end{align*}
so that
\begin{equation}\label{w}
w(\A) +w(\whatA)
=1.
\end{equation}
(ii)
The exact sequence \eqref{M} shows
\begin{equation}\chi (\A)+\chi (\widehat{\A})=1, \label{eu}
\end{equation}
in a similar way.
\end{proof} 

To clarify the difference between the Spanier--Whitehead $\K$-duality 
and 
the reciprocality,
we introduce the notation $K^s_i(-), K^w_i(-), \, i=0,1$ 
for a unital C*-algebra $\A$ such as 
\[K^w_i(\A):=K_i(\A), \quad K^s_i(\A):=K_{i+1}(C_\A).\]
The Spanier--Whitehead $\K$-duality 
is understood as a Poincare-like duality between $K^w_i(-)$ and $\Extw^i(-)$ 
such that  
\begin{equation*}
\K^w_i(D(\A))\cong \Extw^i(\A), \qquad \Extw^i(D(\A))\cong \K^w_i(\A).
\end{equation*}
In contrast to the Spanier--Whitehead $\K$-duality,
the reciprocality is a duality between 
$K^w_i(-)$ (resp. $K^s_i(-)$) and $\Exts^{i+1}(-)$ (resp. $\Extw^{i+1}(-)$).

\begin{proposition}[{cf. \cite[Proposition 1.5]{Sogabe2022}}]\label{dgi}
\begin{enumerate}
\renewcommand{\theenumi}{(\roman{enumi})}
\renewcommand{\labelenumi}{\textup{\theenumi}}
\item
For a unital Kirchberg algebra $\A$ with finitely generated K-groups,
the following holds
\[\Exts^{i+1}(\A)\cong K^w_{i}(\widehat{\A}),\quad \Extw^{i+1}(\A)\cong K^s_{i}(\widehat{\A}), \quad i=0, 1.\]
In particular, we have an isomorphism
\begin{align}
(\Exts^1(\A), \iota_\A, \Exts^0(\A))\cong (\K_0(\widehat{\A}), [1_{\widehat{\A}}]_0, \K_1(\widehat{\A})), \label{ste}
\end{align}
and the exact sequence of the extension groups \eqref{E6} 
is identified with the mapping cone sequence
\eqref{M} for the unital C*-algebra $\widehat{\A}$.
\item
If two unital separable UCT C*-algebras $\A, \B$ satisfy
\begin{equation}\label{eq:ExfsKw}
\Exts^{i+1}(\A)\cong K^w_i(\B),\quad \Extw^{i+1}(\A)\cong K^s_i(\B),\quad i=0, 1,
\end{equation} 
then $\A$ and $\B$ are reciprocal.
\item
For two unital Kirchberg algebras
$\A,\B$ with finitely generated $\K$-groups,
we have $\B = \whatA$ if and only if  \eqref{eq:ExfsKw} holds.
\end{enumerate}
\end{proposition}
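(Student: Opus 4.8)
The plan is to route everything through Spanier--Whitehead $\K$-duality, read off the four groups in the statement as $\K$-theory of the duals $D(C_\A)$ and $D(\A)$, and then use reciprocality together with $D\circ D\sim_{\sqK}\id$ to replace those duals by $\whatA$ and $C_{\whatA}$. The three facts I will lean on are the duality isomorphism $\sqK(X,S^{j})\cong\sqK(S^{j},D(X))=\K_{j}(D(X))$, Bott periodicity $\sqK(-,S^{j+2})\cong\sqK(-,S^{j})$, and the UCT criterion that two separable UCT C*-algebras are $\sqK$-equivalent as soon as their $\Z/2$-graded K-groups are abstractly isomorphic. With these, the group isomorphisms in (i) are immediate. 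Applying duality to $X=C_\A$,
\[
\Exts^{i+1}(\A)=\sqK(C_\A,S^{i+2})\cong \K_{i+2}(D(C_\A))\cong \K_{i}(D(C_\A))\cong \K_{i}(\whatA)=K^{w}_{i}(\whatA),
\]
where the last step uses $D(C_\A)\sim_{\sqK}\whatA$. Similarly, applying duality to $X=\A$ gives $\Extw^{i+1}(\A)=\sqK(\A,S^{i+1})\cong\K_{i+1}(D(\A))$; applying $D$ to the second reciprocality relation $D(C_{\whatA})\sim_{\sqK}\A$ and using $D\circ D\sim_{\sqK}\id$ yields $C_{\whatA}\sim_{\sqK}D(\A)$, whence $\K_{i+1}(D(\A))\cong \K_{i+1}(C_{\whatA})=K^{s}_{i}(\whatA)$. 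This proves the two displayed isomorphisms of (i).

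The delicate part of (i), and what I expect to be the main obstacle, is the ``in particular'' clause: upgrading the group isomorphisms to an isomorphism of the triple \eqref{ste} and of the whole exact sequence \eqref{E6} with the mapping cone sequence \eqref{M} for $\whatA$. The group-level bijections do not by themselves pin down the distinguished element $\iota_\A$, so I must verify that the duality isomorphisms above are \emph{natural} with respect to the maps involved. Concretely, \eqref{E6} is produced by applying $\sqK(-,S^{\bullet})$ to the Puppe sequence $S\to S\A\to C_\A\to\bbC\to\A\to SC_\A\to S$, and I would argue that duality and reciprocality carry this, term by term and map by map, to the $\K$-theory long exact sequence of the Puppe sequence for $\whatA$, that is, to \eqref{M}. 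In particular $\iota_\A:\Z\to\Exts^1(\A)$ is the image under duality of the $\sqK$-class of the unital inclusion $\bbC\to\A$, which corresponds to the $\sqK$-class of the unital inclusion $\bbC\to\whatA$, whose induced map $\Z\to\K_0(\whatA)$ sends $1\mapsto[1_{\whatA}]_0$; hence $\iota_\A\mapsto[1_{\whatA}]_0$. This naturality step is the content imported from \cite[Proposition 1.5]{Sogabe2022}, and it is where the argument must be done with care.

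For (ii) the key observation is that both hypotheses become statements about $\K$-theory of duals. As above, $\Exts^{i+1}(\A)\cong\K_i(D(C_\A))$ and $\Extw^{i+1}(\A)\cong\K_{i+1}(D(\A))$, so \eqref{eq:ExfsKw} reads $\K_i(D(C_\A))\cong\K_i(\B)$ and $\K_{i+1}(D(\A))\cong\K_{i+1}(C_\B)$ for $i=0,1$. Letting $i$ run over both parities, the UCT criterion gives $D(C_\A)\sim_{\sqK}\B$ and $D(\A)\sim_{\sqK}C_\B$; applying $D$ to the latter and using $D\circ D\sim_{\sqK}\id$ yields $D(C_\B)\sim_{\sqK}D(D(\A))\sim_{\sqK}\A$. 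Both $\sqK$-equivalences required by the definition of reciprocality then hold, so $\A$ and $\B$ are reciprocal. (Here $C_\B$ and the various duals are UCT, and dualizability is supplied by Corollary \ref{cor:dsum}.)

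Finally, (iii) combines the previous two parts. If $\B=\whatA$, then the isomorphisms proved in (i) are exactly \eqref{eq:ExfsKw}. Conversely, \eqref{eq:ExfsKw} is precisely the hypothesis of (ii), so $\A$ and $\B$ are reciprocal; since $\B$ is a unital Kirchberg algebra and the reciprocal Kirchberg algebra is unique up to isomorphism, $\B\cong\whatA$.
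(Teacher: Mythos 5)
Your handling of the group isomorphisms in (i) and of parts (ii) and (iii) is correct and essentially matches the paper: duality plus Bott periodicity identifies $\Exts^{i+1}(\A)$ and $\Extw^{i+1}(\A)$ with $\K_i(D(C_\A))$ and $\K_{i+1}(D(\A))$, the UCT lifts abstract K-group isomorphisms to $\sqK$-equivalences for (ii), and uniqueness of the reciprocal Kirchberg algebra settles (iii).

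The gap is in the ``in particular'' clause of (i), and you correctly sensed it was the delicate point but did not close it. Your argument rests on the claim that the $\sqK$-equivalence $D(C_\A)\sim_{\sqK}\whatA$ carries the dualized Puppe sequence of $\A$ ``term by term and map by map'' to the Puppe sequence of $\whatA$, so that $\iota_\A\mapsto[1_{\whatA}]_0$. But reciprocality, as defined, only asserts the \emph{existence} of some abstract $\sqK$-equivalence $D(C_\A)\sim_{\sqK}\whatA$; nothing forces that equivalence to intertwine the map dual to $C_\A\to\bbC$ with the unital inclusion $\bbC\to\whatA$, and an arbitrary equivalence composed with an automorphism of $\whatA$'s K-theory will generally move the class dual to $\iota_\A$ off $[1_{\whatA}]_0$. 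Producing a \emph{compatible} equivalence is exactly what needs to be proven, so as written the step is circular. You also misattribute the missing ingredient: \cite[Proposition 1.5]{Sogabe2022} is not a naturality statement but the purely algebraic lemma (restated as Lemma \ref{star}(iii)) that for finitely generated abelian groups, $G\cong H$ and $G/\Z g\cong H/\Z h$ together imply $(G,g)\cong(H,h)$. That is how the paper proves \eqref{ste}: from the group isomorphisms one has $\K_0(\whatA)\cong\Exts^1(\A)$, and via the sequences \eqref{M} and \eqref{E6} one has $\K_0(\whatA)/\Z[1_{\whatA}]_0\cong\K_1(C_{\whatA})\cong\Extw^1(\A)\cong\Exts^1(\A)/\Z\iota_\A$; the lemma then yields an isomorphism of pairs, with no naturality required. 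Replacing your naturality paragraph by this algebraic step (which crucially uses finite generation -- it fails for general abelian groups, as the paper's remark on $\bigoplus_1^\infty\Z$ shows) repairs the proof.
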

\begin{remark}
We will frequently use \cite[Proposition 1.5]{Sogabe2022} 
stating that for finitely generated abelian groups 
$G_i, g_i\in G_i$ satisfying 
$G_1\cong G_2$ { and} $G_1/\Z g_1\cong G_2/\Z g_2$,
there exists an isomorphism $\varphi : G_1\to G_2$
satisfying $\varphi(g_1) = g_2$.
In the above proposition,
one has 
\begin{align*}
\K_0(\widehat{\A}) & \cong \Exts^1(\A),\\
\K_0(\widehat{\A})/\Z [1_{\widehat{\A}}]_0 \cong \K_1(C_{\widehat{\A}})
 & \cong \Extw^1(\A)\cong \Exts^1(\A)/\Z \iota_\A.
 \end{align*}
Thus, one can check the isomorphism (\ref{ste}).
\end{remark}


\section{Hierarchy of Kirchberg algebra with finitely generated K-groups}
Focusing on the values of $\chi(\A)$ and $w(\A)$, 
we will introduce a hierarchy for the class of unital Kirchberg algebras with finitely generated K-groups and this hierarchy gives a general picture to understand the results in \cite{MatSogabe}. 
\subsection{The relations of several invariants}
Before going to the hierarchy, we clarify the relationships between several invariants to determine the data $(\K_0(\A), [1_\A]_0, \K_1(\A))$.
We say that two invariants {for unital separable UCT algebras with finitely generated K-groups} are equivalent if one is completely determined by {the other}.
For example,
the isomorphism classes of 
$(\K_0(\A), [1_\A]_0)$ and $(\K_0(\A), \K_1(C_\A))$ 
are equivalent invariants if K-groups of $\A$ are finitely generated, and we write
\[(\K_0(\A), [1_\A]_0)\sim_e (\K_0(\A), \K_1(C_\A)).\]
It is easy to see that $(\K_0(\A), [1_\A]_0)$ determines 
$(\K_0(\A), \K_1(C_\A))$ by $\K_1(C_\A)\cong \K_0(\A)/\Z [1_\A]_0$ and the converse direction follows from \cite[Appendix, Proposition 1.5]{Sogabe2022} (see also Lemma \ref{star}).
\begin{remark}
The above equivalence and \cite[Proposition 1.5]{Sogabe2022} never hold if we drop the assumption ``finitely generated abelian group''.
For example,
the both of $(\bigoplus_1^\infty \Z, (1, 0, 0,\dots))$ and $(\bigoplus_1^\infty \Z, (0, 0, 0,\dots))$ gives the same  isomorphism class of the pair of the group and its quotient
$(\bigoplus_1^\infty \Z, \bigoplus_1^\infty \Z)$.
Thus, the pair $((\bigoplus_1^\infty \Z), (\bigoplus_1^\infty \Z)/\Z g)$ 
does not remember $(\bigoplus_1^\infty \Z, g)$
for $g = (1, 0, 0,\dots), (0, 0, 0,\dots).$
\end{remark}
The following is a list of equivalent invariants.
{
\begin{lemma}\label{eq}
Let $\A$ be a unital Kirchberg algebra with finitely generated K-groups.
Then, we have the following equivalences:
\begin{align*}
&(\K_0(\A), [1_\A]_0, \K_1(\A))
&\sim_e\quad&
(\K_0(\A), \K_1(C_\A), \chi(\A), {\rm Tor}(\K_1(\A)))\\
&&\sim_e\quad&
(\K_0(\widehat{\A}), \K_1(C_{\widehat{\A}}), \chi(\widehat{\A}), {\rm Tor}(\K_1(\widehat{\A})))\\
&&\sim_e\quad&(\Exts^1(\A), \Extw^1(\A), \chi(\A), {\rm Tor} (\Extw^0(\A))),
\end{align*}

\end{lemma}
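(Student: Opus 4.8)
The plan is to verify the three links $\sim_e$ one at a time, in each case giving explicit formulas that reconstruct one tuple from the other up to isomorphism. The tools are the structure theorem for finitely generated abelian groups (such a group is determined up to isomorphism by its rank and its torsion subgroup), the six-term sequence \eqref{M}, the reciprocity isomorphisms of Proposition \ref{dgi}, the duality computation of $\K_\ast(D(\A))$ following Corollary \ref{cor:dsum}, and the numerical identities of Lemma \ref{lem:wchi}. Throughout I write $r_i:=\rank(\K_i(\A))$ and use that $\rank$ is additive along exact sequences.

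$(1)\sim_e(2)$. The forward direction is immediate from \eqref{M} and the definitions: $\K_0(\A)$ is common, $\K_1(C_\A)\cong\K_0(\A)/\Z[1_\A]_0$, $\chi(\A)=r_0-r_1$, and ${\rm Tor}(\K_1(\A))$ is read off $\K_1(\A)$. For the reverse direction, $\K_0(\A)$ is again common; the class $[1_\A]_0$ is recovered from the pair $(\K_0(\A),\K_1(C_\A))$ by the equivalence $(\K_0(\A),[1_\A]_0)\sim_e(\K_0(\A),\K_1(C_\A))$ recorded before the lemma, i.e.\ by \cite[Proposition 1.5]{Sogabe2022}; and $\K_1(\A)$ is reconstructed from its torsion ${\rm Tor}(\K_1(\A))$ together with $r_1=r_0-\chi(\A)$.

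$(2)\sim_e(3)$. I would compute the four invariants of $\widehat{\A}$ from those of $\A$ using $\widehat{\A}\sim_{\sqK}D(C_\A)$, Proposition \ref{dgi}, and the duality formulas $\K_0(D(\B))\cong\Z^{\rank\K_0(\B)}\oplus{\rm Tor}(\K_1(\B))$, $\K_1(D(\B))\cong\Z^{\rank\K_1(\B)}\oplus{\rm Tor}(\K_0(\B))$. Concretely, $\K_0(\widehat{\A})\cong\Z^{\rank\K_0(C_\A)}\oplus{\rm Tor}(\K_1(C_\A))$ with $\rank\K_0(C_\A)=\rank\K_1(C_\A)+1-\chi(\A)$ forced by \eqref{M}; $\K_1(C_{\widehat{\A}})\cong\Extw^1(\A)\cong\K_1(D(\A))\cong\Z^{\,r_0-\chi(\A)}\oplus{\rm Tor}(\K_0(\A))$; $\chi(\widehat{\A})=1-\chi(\A)$ by Lemma \ref{lem:wchi}(ii); and ${\rm Tor}(\K_1(\widehat{\A}))={\rm Tor}(\K_0(C_\A))\cong{\rm Tor}(\K_1(\A))$, the last isomorphism holding because in \eqref{M} the cokernel of $\K_1(\A)\hookrightarrow\K_0(C_\A)$ embeds into $\Z$ and is hence torsion-free. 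Each quantity on the right is visibly determined by the tuple $(2)$, so $(2)$ determines $(3)$; the reverse implication follows by running the identical computation for $\widehat{\A}$ and invoking $\widehat{\widehat{\A}}\cong\A$.

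$(3)\sim_e(4)$. This link is a direct dictionary. Proposition \ref{dgi}(i) supplies $\K_0(\widehat{\A})\cong\Exts^1(\A)$ and $\K_1(C_{\widehat{\A}})\cong\Extw^1(\A)$; Lemma \ref{lem:wchi}(ii) turns $\chi(\widehat{\A})$ into $1-\chi(\A)$; and the duality identity $\Extw^0(\A)\cong\K_0(D(\A))\cong\Z^{r_0}\oplus{\rm Tor}(\K_1(\A))$ gives ${\rm Tor}(\Extw^0(\A))\cong{\rm Tor}(\K_1(\A))\cong{\rm Tor}(\K_1(\widehat{\A}))$, matching the two fourth coordinates. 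I expect the main difficulty to be organizational rather than conceptual: one must keep the several torsion identifications straight --- above all ${\rm Tor}(\K_0(C_\A))\cong{\rm Tor}(\K_1(\A))\cong{\rm Tor}(\Extw^0(\A))$ --- and check at every ``determines'' step that only a rank and a torsion subgroup (never a group together with a distinguished element) must be reproduced, so that the structure theorem and \cite[Proposition 1.5]{Sogabe2022} genuinely apply.
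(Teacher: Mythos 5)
Your proposal is correct and follows essentially the same route as the paper: compute the invariants of $\widehat{\A}$ from those of $\A$ via the duality formulas for $\K_\ast(D(-))$, the UCT, and the mapping-cone sequence \eqref{M}, deduce the torsion identification ${\rm Tor}(\K_1(\widehat{\A}))\cong{\rm Tor}(\K_1(\A))$, and use $\widehat{\widehat{\A}}\cong\A$ together with \cite[Proposition 1.5]{Sogabe2022} to reverse each step. The only difference is one of completeness: the paper writes out only the middle equivalence (and routes the torsion computation through $\K_0(C_{\widehat{\A}})\cong\K_0(D(\A))$ rather than your equivalent $\K_1(D(C_\A))$), leaving the first and third links as ``similar arguments,'' which you have correctly supplied.
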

}
\begin{proof}
We only show the second equivalence above: 
\[(\K_0(\A), \K_1(C_\A), \chi(\A), {\rm Tor}(\K_1(\A)))\sim_e (\K_0(\widehat{\A}), \K_1(C_{\widehat{\A}}), \chi(\widehat{\A}), {\rm Tor}(\K_1(\widehat{\A})))\]
because other equivalences follow from similar arguments using the UCT and results in Section 2.

Direct computation with the UCT and (\ref{ste}) yield 
\[\K_0(\widehat{\A})\cong \Exts^1(\A)\cong\Z^{1-\chi (\A)+\rank(\K_1(C_\A))}\oplus {\rm Tor}\; (\K_1(C_\A)).\]
\[\K_1(C_{\widehat{\A}})\cong \K_1(D(\A))\cong \Z^{\rank (\K_0(\A))-\chi (\A)}\oplus {\rm Tor}\; (\K_0(\A)),\]
so that
\begin{align*}
&(\K_0(\widehat{\A}), \K_1(C_{\widehat{\A}}), \chi (\widehat{\A}))\\
=&(\Z^{1-\chi (\A)+\rank(\K_1(C_\A))}\oplus {\rm Tor}\; (\K_1(C_\A)), \Z^{\rank(\K_0(\A))-\chi (\A)}\oplus {\rm Tor}(\K_0(\A)), 1-\chi (\A)).
\end{align*}
Since $\A\cong\widehat{\widehat{\A}}$, we have $(\K_0(\A), \K_1(C_\A), \chi(\A))\sim_e (\K_0(\widehat{\A}), \K_1(C_{\widehat{\A}}), \chi (\widehat{\A}))$.
Furthermore,
the direct computation yields
\begin{align*}
{\rm Tor}(\K_1(\widehat{\A}))=&{\rm Tor}(\Z^{1-w(\K_0(\widehat{\A}), [1_{\widehat{\A}}]_0)}\oplus \K_1(\widehat{\A}))\\
=&{\rm Tor}(\K_0(C_{\widehat{\A}}))\\
\cong&{\rm Tor}(\K_0(D(\A)))\\
\cong&{\rm Tor}(\K_1(\A)).
\end{align*}
\end{proof}
\begin{theorem}\label{extcp}
{The triple $(\Exts^1(\A), \Extw^1(\A), \Extw^0(\A))$ for a unital Kirchberg algebra $\A$ with finitely generated K-groups is a complete invariant to determine the isomorphism class of $\A$,}
and the range of triple is the following:
\begin{align*}
& \{(\Exts^1(\A), \Extw^1(\A), \Extw^0(\A)) \, \mid \, 
\A : \text{unital Kirchberg algebra with finitely generated K-groups}\}\\
= &\{(G_1,  G_1/\Z g,  G_0 \oplus \Z^{1-w(G_1,g)}) \mid
   G_0, G_1: \text{finitely generated abelian groups},
   g\in  G_1 \}.
\end{align*}

\end{theorem}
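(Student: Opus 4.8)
The statement has two halves---that the triple is a complete invariant and that its range is the displayed set---and my plan is to reduce both to Lemma~\ref{eq}, the Kirchberg--Phillips classification, and the reciprocal-duality dictionary of Proposition~\ref{dgi}(i). By Lemma~\ref{eq} the isomorphism class of $\A$ is already determined by the quadruple $(\Exts^1(\A),\Extw^1(\A),\chi(\A),{\rm Tor}(\Extw^0(\A)))$, so for completeness it suffices to recover $\chi(\A)$ and ${\rm Tor}(\Extw^0(\A))$ from the triple $(\Exts^1(\A),\Extw^1(\A),\Extw^0(\A))$, using only isomorphisms of the three groups separately.

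To recover them I would run the UCT for $\sqK(\A,\mathbb{C})$ and $\sqK(\A,S)$, giving $\Extw^0(\A)\cong\Z^{\rank(\K_0(\A))}\oplus{\rm Tor}(\K_1(\A))$ and $\Extw^1(\A)\cong\Z^{\rank(\K_1(\A))}\oplus{\rm Tor}(\K_0(\A))$. Comparing ranks yields the clean identity $\chi(\A)=\rank(\Extw^0(\A))-\rank(\Extw^1(\A))$, while ${\rm Tor}(\Extw^0(\A))$ is read off directly; both are invariants of the triple. Hence isomorphic triples give isomorphic quadruples, and Lemma~\ref{eq} with Kirchberg--Phillips forces $\A\cong\B$. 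This rank identity is the crux of the completeness half.

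For the inclusion $\subseteq$ of the range I would set $G_1:=\Exts^1(\A)$ and $g:=\iota_\A$. The Skandalis sequence~\eqref{E6} shows the map $\Exts^1(\A)\to\Extw^1(\A)$ is onto with kernel $\Z\iota_\A$, so $\Extw^1(\A)\cong G_1/\Z g$. The isomorphism~\eqref{ste} identifies $(G_1,g)$ with $(\K_0(\widehat{\A}),[1_{\widehat{\A}}]_0)$, whence $w(G_1,g)=w(\widehat{\A})=1-w(\A)$ by Lemma~\ref{lem:wchi}(i). Splitting a free summand $\Z^{w(\A)}=\Z^{1-w(G_1,g)}$ off $\Extw^0(\A)\cong\Z^{\rank(\K_0(\A))}\oplus{\rm Tor}(\K_1(\A))$ (possible since $\rank(\K_0(\A))\geq w(\A)$) then exhibits $\Extw^0(\A)$ in the required form $G_0\oplus\Z^{1-w(G_1,g)}$.

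For the reverse inclusion $\supseteq$, given finitely generated $G_0,G_1$ and $g\in G_1$, I would realize by the Kirchberg--Phillips range of invariants a unital UCT Kirchberg algebra $\B$ with $(\K_0(\B),[1_\B]_0,\K_1(\B))\cong(G_1,g,G_0)$, and set $\A:=\widehat{\B}$, so that $\widehat{\A}\cong\B$ by involutivity of the reciprocal. Proposition~\ref{dgi}(i) then gives $\Exts^1(\A)\cong\K_0(\B)=G_1$, $\Extw^1(\A)\cong G_1/\Z g$, and $\Extw^0(\A)\cong\K_0(C_{\widehat{\A}})\cong\K_0(C_{\B})$. The last group I would compute from the mapping-cone sequence~\eqref{M} for $\B$: it is an extension of $\ker(\Z\xrightarrow{\,g\,}G_1)$ by $\K_1(\B)=G_0$, and since that kernel is a free subgroup of $\Z$ the extension splits, giving $G_0$ when $g$ is non-torsion and $G_0\oplus\Z$ when $g$ is torsion---that is, $G_0\oplus\Z^{1-w(G_1,g)}$ in both cases. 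I expect this final case-split, and checking that the free exponent matches $1-w(G_1,g)$ uniformly, to be the main technical obstacle; everything else is bookkeeping with the UCT and the duality dictionary already in hand.
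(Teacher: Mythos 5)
Your proposal is correct and takes essentially the same route as the paper: completeness via Lemma~\ref{eq} together with the identity $\chi(\A)=\rank(\Extw^0(\A))-\rank(\Extw^1(\A))$, and the range via a Kirchberg--Phillips realization of an arbitrary $(G_1,g,G_0)$ as $(\K_0(\widehat{\A}),[1_{\widehat{\A}}]_0,\K_1(\widehat{\A}))\cong(\Exts^1(\A),\iota_\A,\Exts^0(\A))$, followed by the computation of the triple from the exact sequence \eqref{E6}. The only difference is organizational: you split the range equality into two explicit inclusions (producing the forward one by splitting off a free summand $\Z^{w(\A)}$ of $\Extw^0(\A)$), whereas the paper's single parametrization argument covers both inclusions at once.
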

\begin{proof}
Let $\A, \B$ be unital Kirchberg algebras with finitely generated K-groups.
Suppose that there exists an isomorphism 
\begin{equation}(\Exts^1(\A), \Extw^1(\A), \Extw^0(\A))\cong (\Exts^1(\B), \Extw^1(\B), \Extw^0(\B)).\label{eiso}\end{equation}
Since $\chi(\A)=\rank(\Extw^0(\A))-\rank(\Extw^1(\A))$,
the isomorphism (\ref{eiso}) implies $\chi(\A)=\chi(\B)$ and ${\rm Tor}(\Extw^0(\A))\cong {\rm Tor}(\Extw^0(\B))$.
Thus, Lemma \ref{eq} shows $$(\K_0(\A), [1_\A]_0, \K_1(\A))\cong (\K_0(\B), [1_\B]_0, \K_1(\B))$$
and the triple $(\Exts^1(\A), \Extw^1(\A), \Extw^0(\A))$ provides a complete invariant.

For an arbitrary triple $(G_1, g, G_0)$ of finitely generated abelian groups $G_1, G_0$ and an element $g\in G_1$, one may find a unital Kirchberg algebra $\widehat{A}$ satisfying $(G_1, g, G_0)\cong (\K_0(\widehat{\A}), [1_{\widehat{\A}}]_0, \K_1(\widehat{\A}))$.
Since $(\Exts^1(\A), \iota_\A, \Exts^0(\A))\cong(\K_0(\widehat{\A}), [1_{\widehat{\A}}]_0, \K_1(\widehat{\A}))$, 
the six term exact sequence (\ref{E6}) implies that the range of $(\Exts^1(\A), \Extw^1(\A), \Extw^0(\A))$ is represented by $(G_1, G_1/\Z g, G_0\oplus \Z^{1-w(G_1, g)})$.
\end{proof}

\subsection{A hierarchy of unital Kirchberg algebras} \label{sec:3.2}
Let $\mathcal{K}(l, w)$ for $(l, w)\in \Z\times \{0, 1\}$ 
be the class of unital Kirchberg algebras $\A$
with finitely generated K-groups satisfying 
$\chi(\A)=l,\; w(\A)=w$.
We write $$\mathcal{K}_{>0}:=\bigcup_{l>0, w=0, 1} \mathcal{K}(l, w),\;\;\mathcal{K}_{\leq 0}: =\bigcup_{l\leq 0, w=0, 1}\mathcal{K}(l, w).$$ 
\begin{theorem}\label{Hei}
\begin{enumerate}
\renewcommand{\theenumi}{(\roman{enumi})}
\renewcommand{\labelenumi}{\textup{\theenumi}}
\item The reciprocality gives a {bijective} correspondence
\[\mathcal{K}(l, w)\ni \A\mapsto \widehat{\A}\in \mathcal{K}(1-l, 1-w).\]
\item If both of the two algebras 
$\A, \B$ belong to either $\mathcal{K}_>0$ or $\mathcal{K}_{\leq 0}$, 
they are isomorphic if and only if 
{$\pi_i(\operatorname{Aut}(\A))\cong\pi_i(\operatorname{Aut}(\B)),\,i=1, 2,$\;}
where $\pi_i(\operatorname{Aut}(\A))$ 
is the $i$th homotopy group of the automorphism group 
$\operatorname{Aut}(\A)$ of $\A$.
\end{enumerate}
\end{theorem}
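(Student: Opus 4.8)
The plan is to handle the two parts in turn, deriving (i) from the arithmetic of $\chi$ and $w$ under reciprocality and (ii) by combining (i) with Sogabe's homotopy-group criterion. For (i), I would first note that by the Proposition preceding Lemma~\ref{lem:wchi} the reciprocal $\widehat{\A}$ exists and is unique up to isomorphism, so $\A\mapsto\widehat{\A}$ descends to a well-defined map on isomorphism classes. If $\A\in\mathcal{K}(l,w)$, then $\chi(\A)=l$ and $w(\A)=w$, and Lemma~\ref{lem:wchi} gives $\chi(\widehat{\A})=1-l$ and $w(\widehat{\A})=1-w$, so $\widehat{\A}\in\mathcal{K}(1-l,1-w)$; this shows the correspondence has the asserted target. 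Bijectivity I would deduce from involutivity: since $\widehat{\widehat{\A}}\cong\A$, the reciprocality map $\mathcal{K}(1-l,1-w)\to\mathcal{K}(l,w)$ is a two-sided inverse of the map $\mathcal{K}(l,w)\to\mathcal{K}(1-l,1-w)$, so both are bijections of isomorphism classes.

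The organizing observation for (ii) is that reciprocality interchanges the two halves. For any integer $l$, exactly one of $l$ and $1-l$ is strictly positive and the other is $\le 0$, so the partition $\Z=\{l>0\}\sqcup\{l\le 0\}$ is exactly calibrated to the symmetry $l\mapsto 1-l$. Combined with (i), this yields $\A\in\mathcal{K}_{>0}\iff\widehat{\A}\in\mathcal{K}_{\le 0}$ and $\A\in\mathcal{K}_{\le 0}\iff\widehat{\A}\in\mathcal{K}_{>0}$; in particular $\A$ and $\widehat{\A}$ never lie in the same half.

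To prove (ii), the forward direction is immediate, as an isomorphism $\A\cong\B$ induces a homeomorphism of automorphism groups. For the converse, assume $\A,\B$ lie in the same half and $\pi_i(\operatorname{Aut}(\A))\cong\pi_i(\operatorname{Aut}(\B))$ for $i=1,2$. I would first promote this to an isomorphism of all homotopy groups $\pi_i$, $i\ge 1$: Dadarlat's formulas \cite[Corollary 5.10]{Dadarlat2007} express these groups through the $\sqK$-groups of $\A$ with a $2$-periodicity in $i$, so that the pair $(\pi_1,\pi_2)$ determines the whole sequence. Sogabe's theorem \cite[Theorem 1.2]{Sogabe2022} then forces $\A\cong\B$ or $\A\cong\widehat{\B}$. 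The second possibility is excluded by the previous paragraph, because $\widehat{\B}$ lies in the half opposite to $\B$, hence opposite to $\A$; therefore $\A\cong\B$.

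The step I expect to be the main obstacle is the reduction from $i=1,2$ to all $i\ge 1$: one must pin down precisely how Dadarlat's formulas render $\pi_i(\operatorname{Aut}(\A))$ a $2$-periodic function of $i$ for $i\ge 1$, so that the two low-degree groups already carry the full homotopy data needed to invoke \cite[Theorem 1.2]{Sogabe2022}. Once this is in place, the remainder is routine bookkeeping with $\chi$ and $w$ through Lemma~\ref{lem:wchi}; the conceptual content sits entirely in the half-swapping, which is what breaks the reciprocal ambiguity $\{\A,\widehat{\A}\}$ that the homotopy groups alone leave undetermined.
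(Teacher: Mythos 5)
Your proof is correct and follows essentially the same route as the paper: part (i) is exactly Lemma~\ref{lem:wchi} (the equations $\chi(\A)+\chi(\whatA)=1$ and $w(\A)+w(\whatA)=1$) together with involutivity of reciprocality, and part (ii) combines \cite[Theorem 1.2]{Sogabe2022} with the observation that $l\mapsto 1-l$ interchanges the halves $\{l>0\}$ and $\{l\leq 0\}$, so $\A\cong\widehat{\B}$ is excluded. The step you flag as the main obstacle---promoting $(\pi_1,\pi_2)$ to all higher homotopy groups---is not actually needed, since \cite[Theorem 1.2]{Sogabe2022} already takes only the isomorphisms $\pi_i(\Aut(\A))\cong\pi_i(\Aut(\B))$, $i=1,2$, as its hypothesis, which is precisely how the paper invokes it.
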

\begin{proof}
The assertion (i) follows from the equations (\ref{w}) and (\ref{eu}).
By \cite[Theorem 1.2]{Sogabe2022},
either $\A\cong \B$ or $\A\cong \widehat{\B}$ must hold,
and (i) shows that $\A\cong\widehat{\B}$ does not happen.
This shows (ii).
\end{proof}
The locations of $\A$ and $\widehat{\A}$ 
in the hierarchy given by $\K(l, w),\, (l, w)\in \Z\times \{0, 1\}$ 
are described in the table below.
\[\begin{array}{|c|ccccc|ccccc|}
\hline
&&&\chi>0&&&&&\chi\leq 0&&\\
\hline
&\cdots&\vline& \chi=l&\vline&\cdots&\cdots&\vline&\chi=1-l&\vline&\cdots\\
\hline
 w=0&\cdots&\vline&\A&\vline&\cdots&\cdots&\vline&\widehat{\B}&\vline&\cdots\\
\hline
 w=1&\cdots&\vline&\B&\vline&\cdots&\cdots&\vline&\widehat{\A}&\vline&\cdots\\
\hline
\end{array}\]
{
The Cuntz algebras 
$\mathcal{O}_n, \; n=2, 3, \dots,\infty$, \; 
simple Cuntz--Krieger algebras $\OA$, 
and the Kirchberg algebra 
$\mathcal{P}_\infty$ $\sqK$-equivalent to $S$ 
lie in the following hierarchy:
\begin{gather*}
\mathcal{P}_\infty\in\mathcal{K}(-1, 0),
\quad
\mathcal{O}_2, \mathcal{O}_3,\dots\in \mathcal{K}(0, 0), 
\quad
\mathcal{O}_\infty\in\mathcal{K}(1, 1),\\
\OA\in\mathcal{K}(0, 0)\cup\mathcal{K}(0, 1).
\end{gather*}
We have the following corollaries.
\begin{corollary}
Let $\A, \B$ be unital Kirchberg algebras with finitely generated K-groups.
Then the following three conditions are equivalent:
\begin{enumerate}
\renewcommand{\theenumi}{(\roman{enumi})}
\renewcommand{\labelenumi}{\textup{\theenumi}}
\item $\A\cong \B$.
\item
$\pi_i(\operatorname{Aut}(\A))\cong \pi_i(\operatorname{Aut}(\B)),\, \, i=1, 2$ 
and $\chi (\A)=\chi (\B)$.
\item 
$\pi_i(\operatorname{Aut}(\A))\cong \pi_i(\operatorname{Aut}(\B)),\, \, i=1, 2$ 
and $w(\A)=w(\B)$.
\end{enumerate}
\end{corollary}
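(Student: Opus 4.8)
The plan is to separate the easy and hard directions. The implications $(i)\Rightarrow(ii)$ and $(i)\Rightarrow(iii)$ are immediate: an isomorphism $\A\cong\B$ induces isomorphisms $\pi_i(\Aut(\A))\cong\pi_i(\Aut(\B))$ for all $i$, and both $\chi$ and $w$ are manifestly isomorphism invariants of a unital C*-algebra with finitely generated K-groups. So the content lies in the two converse implications $(ii)\Rightarrow(i)$ and $(iii)\Rightarrow(i)$, which I would handle in parallel since they differ only in which scalar invariant is used to break a symmetry.

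For the converse directions I would begin from the common homotopy hypothesis $\pi_i(\Aut(\A))\cong\pi_i(\Aut(\B))$ for $i=1,2$. By \cite[Theorem 1.2]{Sogabe2022}, already invoked in the proof of Theorem \ref{Hei}, this hypothesis alone forces the dichotomy that either $\A\cong\B$ or $\A\cong\widehat{\B}$. Thus in each of $(ii)$ and $(iii)$ it suffices to exclude the reciprocal alternative $\A\cong\widehat{\B}$, after which $\A\cong\B$ is the only surviving possibility.

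The key input for this exclusion is Lemma \ref{lem:wchi}, giving $\chi(\B)+\chi(\widehat{\B})=1$ and $w(\B)+w(\widehat{\B})=1$. Under hypothesis $(ii)$, if $\A\cong\widehat{\B}$ held then $\chi(\B)=\chi(\A)=\chi(\widehat{\B})=1-\chi(\B)$, forcing $2\chi(\B)=1$; since $\chi(\B)\in\Z$ this is impossible, so $\A\cong\B$. Under hypothesis $(iii)$, the identical argument with $w$ gives $w(\B)=1-w(\B)$, forcing $2w(\B)=1$, which is impossible as $w(\B)\in\{0,1\}$; hence again $\A\cong\B$.

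I do not expect a genuinely hard step here: all the depth is carried by \cite[Theorem 1.2]{Sogabe2022} and Lemma \ref{lem:wchi}. The one point meriting care is conceptual rather than computational, namely recognizing that both $\chi$ and $w$ are parity-type invariants which reciprocality sends to their complements $1-\chi$ and $1-w$, and that neither admits the fixed value $\tfrac12$ that an algebra would need in order to be self-reciprocal. It is exactly this integrality obstruction that upgrades Theorem \ref{Hei}(ii), which assumed $\A$ and $\B$ lie on the same side of the hierarchy, to the present statement, where a single matching scalar ($\chi$ or $w$) suffices no matter where the two algebras sit.
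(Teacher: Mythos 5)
Your proof is correct and follows essentially the same route the paper intends: the corollary is an immediate consequence of the dichotomy from \cite[Theorem 1.2]{Sogabe2022} (either $\A\cong\B$ or $\A\cong\widehat{\B}$) combined with Lemma \ref{lem:wchi}, exactly as in the proof of Theorem \ref{Hei}(ii). Your integrality/parity observation ($2\chi(\B)=1$ and $2w(\B)=1$ being impossible) is precisely the mechanism by which equations (\ref{w}) and (\ref{eu}) rule out the reciprocal alternative.
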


\begin{corollary}[{\cite[Theorem 4.8, Corollary 5.2]{MatSogabe}}]\label{hico}
\;
\begin{enumerate}
\renewcommand{\theenumi}{(\roman{enumi})}
\renewcommand{\labelenumi}{\textup{\theenumi}}
\item Two simple Cuntz--Krieger algebras
 $\OA$ and $\OB$  are isomorphic if and only if 
 $$\pi_i(\operatorname{Aut}(\OA))\cong \pi_i(\operatorname{Aut}(\OB)), \;\; i=1, 2.$$
\item For a simple Cuntz--Krieger algebra $\OA$, 
the triple $(\Exts^1(\OA), \Extw^1(\OA), \Extw^0(\OA))$ 
is equivalent to $(\Exts^1(\OA), \Extw^1(\OA))$ 
whose possible range is given by
\begin{align*}
&\{(\Exts^1(\OA), \Extw^1(\OA)) \;|\; \OA : \text{simple Cuntz--Krieger algebra}\}\\
=&\{(\Exts^1(\B), \Extw^1(\B)\; |\; \B \in\mathcal{K}(0, 0)\cup\mathcal{K}(0, 1), {\rm Tor}(\K_1(\B))=0\}\\ 
=&\{(\K_0(\A), \K_1(C_\A))\; |\; \A\in \mathcal{K}(1, 1)\cup \mathcal{K}(1, 0), {\rm Tor}(\K_1(\A))=0\}\\
=&\{(G,  G/\Z g) \mid G : \text{finitely generated abelian group with } \rank(G)\geq 1, g\in  G \}.
\end{align*}
\end{enumerate}
\end{corollary}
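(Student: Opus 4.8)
The plan is to derive both assertions from the hierarchy of Section \ref{sec:3.2} and the reciprocality dictionary of Proposition \ref{dgi}, isolating the one genuinely non-formal ingredient, namely the range-of-invariants (realization) theorem for simple Cuntz--Krieger algebras. First I would record the K-theoretic input: for an irreducible non-permutation $\{0,1\}$-matrix $A$, the algebra $\OA$ is a unital Kirchberg algebra with $\K_0(\OA)\cong\Coker(I-A^t)$ and $\K_1(\OA)\cong\Ker(I-A^t)$. In particular $\K_1(\OA)$ is free, so ${\rm Tor}(\K_1(\OA))=0$, and since $\Ker(I-A^t)$ and $\Coker(I-A^t)$ have equal rank one gets $\chi(\OA)=0$; hence $\OA\in\mathcal{K}(0,0)\cup\mathcal{K}(0,1)\subseteq\mathcal{K}_{\leq 0}$. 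Assertion (i) is then immediate from Theorem \ref{Hei}(ii): two simple Cuntz--Krieger algebras both lie in $\mathcal{K}_{\leq 0}$, so $\pi_i(\Aut(\OA))\cong\pi_i(\Aut(\OB))$ for $i=1,2$ already forces $\OA\cong\OB$.

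For the equivalence of the triple $(\Exts^1(\OA),\Extw^1(\OA),\Extw^0(\OA))$ with the pair $(\Exts^1(\OA),\Extw^1(\OA))$, the reverse determination is trivial, so I would only compute $\Extw^0(\OA)$ from the pair. By the UCT, $\Extw^0(\OA)=\sqK(\OA,\mathbb{C})\cong\Z^{\rank(\K_0(\OA))}\oplus{\rm Tor}(\K_1(\OA))$, which is free since ${\rm Tor}(\K_1(\OA))=0$; likewise $\Extw^1(\OA)=\sqK(\OA,S)\cong\Z^{\rank(\K_1(\OA))}\oplus{\rm Tor}(\K_0(\OA))$, so $\rank(\Extw^1(\OA))=\rank(\K_1(\OA))=\rank(\K_0(\OA))$ using $\chi(\OA)=0$. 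Thus $\Extw^0(\OA)\cong\Z^{\rank(\Extw^1(\OA))}$ is recovered from the pair, which is exactly the collapse of the general range formula of Theorem \ref{extcp} in the Cuntz--Krieger case.

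For the chain of four equalities I would argue through the hierarchy. The inclusion of the first set in the second is the K-theory of the first paragraph. The equality of the second and third sets is reciprocality: by Theorem \ref{Hei}(i) the map $\B\mapsto\widehat{\B}$ is a bijection $\mathcal{K}(0,0)\cup\mathcal{K}(0,1)\to\mathcal{K}(1,0)\cup\mathcal{K}(1,1)$, it preserves the condition ${\rm Tor}(\K_1)=0$ by the torsion computation in the proof of Lemma \ref{eq}, and Proposition \ref{dgi}(i) gives $\Exts^1(\B)\cong\K_0(\widehat{\B})$ and $\Extw^1(\B)\cong\K_1(C_{\widehat{\B}})$, so the two descriptions literally coincide. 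The equality of the third and fourth sets is the Kirchberg--Phillips realization combined with $\chi=1$: for $\A\in\mathcal{K}(1,0)\cup\mathcal{K}(1,1)$ one has $(\K_0(\A),\K_1(C_\A))=(\K_0(\A),\K_0(\A)/\Z[1_\A]_0)$ with $\rank(\K_0(\A))\geq 1$ forced by $\chi(\A)=1$, and conversely any pair $(G,g)$ with $\rank(G)\geq 1$ is realized by a unital Kirchberg algebra with $\K_0=G$, $[1]=g$, $\K_1=\Z^{\rank(G)-1}$.

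The remaining and principal difficulty is closing the loop, i.e. showing the second set is contained in the first: every admissible pair must be realized by an \emph{actual} simple Cuntz--Krieger algebra, not merely by an abstract Kirchberg algebra. By the dictionary above this amounts to the realization statement that every unital Kirchberg algebra $\B\in\mathcal{K}(0,0)\cup\mathcal{K}(0,1)$ with free $\K_1$ is isomorphic to some $\OA$; equivalently, that the triples $(\Coker(I-A^t),[1],\Ker(I-A^t))$ exhaust all $(H,h,\Z^{\rank(H)})$ as $A$ ranges over irreducible non-permutation $\{0,1\}$-matrices. This is where the formal duality arguments give out and one must invoke the structure theory of Cuntz--Krieger algebras, namely R\o rdam's classification by $(\K_0,[1])$ together with the construction of matrices with prescribed cokernel and position of the unit. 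I expect this realization to be the main obstacle, while everything else reduces to the hierarchy and reciprocality already established.
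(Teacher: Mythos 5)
Your proposal is correct and matches the paper's (implicit) argument: the paper states Corollary \ref{hico} with no written proof, treating it as an immediate consequence of Theorem \ref{Hei}(ii) together with the placement $\OA\in\mathcal{K}(0,0)\cup\mathcal{K}(0,1)$ (which is your $\chi(\OA)=0$, ${\rm Tor}(\K_1(\OA))=0$ computation), the reciprocality dictionary of Proposition \ref{dgi}, Theorem \ref{extcp}, and the results cited from \cite{MatSogabe}. The single step you isolate as non-formal --- realizing every pair $(G, G/\Z g)$ with $\rank(G)\geq 1$ by an actual simple Cuntz--Krieger algebra --- is exactly what the paper outsources to \cite[Theorem 4.8, Corollary 5.2]{MatSogabe} (resting on R\o rdam's classification and the construction of matrices with prescribed cokernel and unit position), so deferring that step to those sources is consistent with the paper's own treatment.
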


\section{Main result}

In this section,
we first give some combinatorial argument in Proposition \ref{comb},
and we will show our main result (Theorem \ref{MTfg}) as an application of the argument.

Let $G, H$ be finitely generated abelian groups and let $g$ (resp. $h$) be an element of $G$ (resp. $H$).
It is proved in \cite[Proposition 1.5, Appendix]{Sogabe2022} that there is an isomorphism $\varphi : G\cong H$ with $\varphi(g)=h$ if one has two isomorphisms $G\cong H$ and $G/\Z g\cong H/\Z h$.
We will explain that the isomorphisms 
$G\cong H, G/\Z g\cong H/\Z h$ follow from an isomorphism  
$G\oplus (G/\Z g)\cong H\oplus (H/\Z h)$ (Proposition \ref{comb}, see also Remark \ref{rco}).

We use the same notation as in \cite[Section 2.5, Appendix]{Sogabe2022}.
By the fundamental theorem of finitely generated abelian groups, one has the following unique presentation:
\[G=\Z^{\rank(G)}\oplus \bigoplus_{p: \text{prime}} G(p),\;\;\;\text{where}\;\; G(p)=0\; \text{or}\]\[ G(p)=\Z/{p^{n_1}}\Z \oplus \Z/p^{n_2}\Z\oplus \dots \oplus \Z/p^{n_k}\Z,\;\; n_1\geq n_2\geq \dots\geq n_k\geq 1.\]
For $G(p)\not=0$, we introduce a multi-set 
(i.e., a collection of positive integers)
\[I(G(p)):=\{n_1, n_2, \dots , n_k\}.\]
The fundamental theorem shows that $I(G(p))$ completely determines $G(p)$.
For example, we have
\[I(\Z/p^2\Z\oplus\Z/p\Z\oplus \Z/p\Z)=\{2, 1, 1\},\quad I(\Z/p^2\Z\oplus \Z/p\Z)=\{2, 1\}.\]
For two finitely generated abelian groups $G, H$ and a prime number $p$,
we carefully define the intersection $I(G(p))\cap I(H(p))$ as the maximal subset of both $I(G(p))$ and $I(H(p))$ such that $I(G(p))\backslash (I(G(p))\cap I(H(p)))$ and $I(H(p))\backslash (I(G(p))\cap I(H(p)))$ do not share any common integers.
Since $|I(G(p))|, |I(H(p))|<\infty$, the above definition is well-defined.
For example, one has
\[I(\Z/p^2\Z\oplus\Z/p\Z\oplus \Z/p\Z)\cap I(\Z/p\Z\oplus \Z/p\Z)=\{1, 1\},\]\[ I(\Z/p^2\Z\oplus \Z/p\Z\oplus \Z/p\Z)\cap I(\Z/p^2\Z\oplus \Z/p^2\Z\oplus\Z/p\Z)=\{2, 1\}.\]
The pair $(G(p), H(p))$ is said to satisfy $(**)$ if either $G(p)=H(p)$ or the following holds:
\[I(G(p))\backslash (I(G(p))\cap I(H(p)))=\{a_1, a_2, \dots\},\]\[ I(H(p))\backslash (I(G(p))\cap I(H(p)))=\{b_1, b_2, \dots\},\]
\[b_1>a_1>b_2>a_2>\dots. \]
\begin{example}\label{wstar}
For the pair $$(G, H)=(\Z/\Z p^5\oplus \Z/\Z p^3\oplus \Z/\Z p^2, \;\;\Z/\Z p^5\oplus \Z/\Z p^3\oplus \Z/ \Z p^3\oplus \Z/\Z p),$$
one has
\[I(G(p))\cap I(H(p))=\{5, 3\},\]\[I(G(p))\backslash (I(G(p))\cap I(H(p)))=\{2\},\;\; I(H(p))\backslash (I(G(p))\cap I(H(p)))=\{3, 1\},\]
and $(G(p), H(p))$ satisfies $(**)$.
We also have
\[I((G\oplus H)(p))=I(G(p)\oplus H(p))=\{5, 5, 3, 3, 3, 2, 1\}\]
and
\begin{align*}
 &(I(G(p))\backslash (I(G(p))\cap I(H(p))))\sqcup (I(H(p))\backslash (I(G(p))\cap I(H(p)))) \\
=&\{3, 2, 1\}\\
=&\{n\in \mathbb{N}\;|\; n\;\text{appears in}\; I((G\oplus H)(p))\;\text{odd times}\}
\end{align*} 
\end{example}
The following lemma is proved in \cite[Appendix]{Sogabe2022}.
\begin{lemma}[{\cite[Section 2.5, Proposition 2.18, 2.19]{Sogabe2022}}]\label{star}
Let $g$ (resp. $h$) be an element of $G$ (resp. $H$).
\begin{enumerate}
\renewcommand{\theenumi}{(\roman{enumi})}
\renewcommand{\labelenumi}{\textup{\theenumi}}
\item If $g$ is a torsion element in $G$, 
then $((G/\Z g)(p), G(p))$ satisfies $(**)$ for all $p$.
\item If $g$ is not a torsion element in $G$, 
then $(G(p), (G/\Z g)(p))$ satisfies $(**)$ for all $p$.
\item If $G\cong H$ and $G/\Z g\cong H/\Z h$ hold, then there is an isomorphism $G\cong H$ mapping $g$ to $h$ (i.e., $(G, g)\cong (H, h)$).
\end{enumerate}
\end{lemma}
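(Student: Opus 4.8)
The plan is to reduce all three parts to a single prime $p$ and then to a purely combinatorial statement about the partition types of finite abelian $p$-groups. Fix $p$ and use the primary decomposition $G=\Z^{\rank(G)}\oplus\bigoplus_q G(q)$, and write $P:=G(p)$, $x:=g_p\in P$. If $g$ is torsion, the Chinese Remainder Theorem gives $\Z g=\bigoplus_q\Z g_q$, hence $(G/\Z g)(p)=P/\langle x\rangle$, so the content of (i) is to compare the type $\lambda$ of $P$ with the type $\mu$ of $P/\langle x\rangle$. If $g$ is non-torsion, I would localize at $p$ with $R:=\Z_{(p)}$: writing $g=(g_0,x)$ and factoring $g_0=p^{e}u$ with $u$ a primitive (unimodular) vector of $R^{\rank(G)}$, one splits off a free summand of rank $\rank(G)-1$ and identifies ${\rm Tor}((G/\Z g)\otimes R)$ with the ``grafted'' finite module $(Ru\oplus P)/R(p^{e}u+x)$. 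This isolates the contribution of the free direction as one extra generator joined by a relation of depth $p^{e}$.

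The key input for (i) and (ii) is an interlacing theorem: for $P$ of type $\lambda=(\lambda_1\ge\lambda_2\ge\cdots)$ and any $x\in P$, the type $\mu$ of $P/\langle x\rangle$ satisfies $\lambda_1\ge\mu_1\ge\lambda_2\ge\mu_2\ge\cdots$. I would prove this by presenting $P/\langle x\rangle$ by a bordered diagonal matrix with diagonal $p^{\lambda_1},\dots,p^{\lambda_k}$ and comparing the $p$-valuations of the gcd's of the minors (Fitting ideals) of the bordered and unbordered matrices; alternatively one can induct on $|P|$. In the torsion case this shows $\lambda$ dominates $\mu$, that is $G(p)$ dominates $(G/\Z g)(p)$; in the non-torsion case the same theorem applied to the bordered matrix of the grafted module (whose order is $p^{e}|P|$) shows that $(G/\Z g)(p)$ dominates $G(p)$. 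In either direction I would finish with the elementary observation that an interlacing pair of multisets, after one deletes their common sub-multiset, has strictly alternating remainders --- which is precisely condition $(**)$. This yields (i) and (ii).

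For (iii), transport $h$ through a fixed isomorphism $H\cong G$, so it suffices to produce $\alpha\in\Aut(G)$ with $\alpha(g)=h$ out of an isomorphism $G/\Z g\cong G/\Z h$. Comparing ranks shows $g$ and $h$ have the same torsion status. In the torsion case, the Chinese Remainder Theorem and the canonical primary decomposition turn $G/\Z g\cong G/\Z h$ into $P/\langle g_p\rangle\cong P/\langle h_p\rangle$ for every $p$, and I must produce $\alpha_p\in\Aut(P)$ carrying $g_p$ to $h_p$; granting this (the crux, below), $\alpha:=\id\oplus\bigoplus_p\alpha_p$ does the job. In the non-torsion case I would first normalize the free direction: the content $p^{e_p}$ is read off from $|{\rm Tor}((G/\Z g)(p))|=p^{e_p}|G(p)|$, so $g_0$ and $h_0$ have equal content $c$; transitivity of $GL_{\rank(G)}(\Z)$ on primitive vectors aligns their directions, and the automorphisms $u\mapsto u+s$ ($s\in\bigoplus_q G(q)$) adjust the torsion part of $g$ modulo $c\!\cdot\!{\rm Tor}(G)$. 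This reduces the non-torsion case to the orbit problem for the grafted finite modules, i.e. again to the $p$-local rigidity.

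The main obstacle is this $p$-local orbit rigidity: that the abstract isomorphism type of $P/\langle x\rangle$ determines the $\Aut(P)$-orbit of $x$. The interlacing of (i) and (ii) is only a necessary constraint, and rigidity asks that, for fixed $\lambda$, the map $x\mapsto\operatorname{type}(P/\langle x\rangle)$ separate orbits. I would prove it by first reducing element-orbits to orbits of cyclic subgroups --- multiplication by a unit of $\Z/p^{m}$ extends to an automorphism of $P$, so any generator of $\langle x\rangle$ may be moved to any other --- and then exhibiting a normal form for the pair $(P,\langle x\rangle)$, adapted to a decomposition $P=\bigoplus_i\langle e_i\rangle$, from which $\mu$ is read off and which is itself determined by $\mu$. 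Showing that this matching between $\lambda$ and $\mu$ is well defined, and that two elements with isomorphic quotients cannot lie in distinct orbits, is the delicate combinatorial heart of the argument; the bookkeeping of the non-torsion reduction (content, primitive direction, and the residue modulo $c\cdot{\rm Tor}(G)$) is the other place that demands care.
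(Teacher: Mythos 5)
A preliminary remark: the paper itself does not prove Lemma \ref{star} at all --- it imports it wholesale from \cite[Section 2.5, Appendix]{Sogabe2022} --- so there is no internal proof to compare yours against; what follows judges your argument on its own terms. Your treatment of (i) and (ii) is essentially right and complete in outline. The prime-by-prime reduction is correct in both cases: for torsion $g$ the Chinese Remainder Theorem gives $(G/\Z g)(p)=G(p)/\langle g_p\rangle$, and for non-torsion $g$ the localization at $\Z_{(p)}$, the factorization $g_0=p^eu$ with $u$ unimodular, and the identification of $(G/\Z g)(p)$ with the grafted module $(\Z_{(p)}u\oplus G(p))/\Z_{(p)}(p^eu+g_p)$ are all valid (the grafted module contains $G(p)$ with cyclic quotient, the situation dual to a quotient by a cyclic subgroup). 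The two ingredients you invoke --- the interlacing $\lambda_1\ge\mu_1\ge\lambda_2\ge\mu_2\ge\cdots$ between the type of a finite abelian $p$-group and the type of its quotient by (or extension by) a cyclic group, and the observation that weak interlacing becomes the strict alternation $(**)$ after deleting the maximal common sub-multiset --- are both true and provable exactly as you indicate.

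The genuine gap is in (iii). The reductions you perform there (transport along a fixed isomorphism, equality of torsion status via ranks, CRT, and in the non-torsion case the normalization of content and of the primitive direction) are routine and correct, but they reduce (iii) to the claim you call $p$-local orbit rigidity: for a finite abelian $p$-group $P$, the isomorphism type of $P/\langle x\rangle$ determines the $\Aut(P)$-orbit of $x$. That claim is never proved; you describe what a proof ``would'' do (a normal form for $(P,\langle x\rangle)$ read off from, and determined by, $\mu$) and you yourself label the verification ``the delicate combinatorial heart of the argument''. But for torsion $g$ this rigidity claim \emph{is} part (iii), merely restated one prime at a time, so the proposal reduces the lemma to itself rather than proving it; and the interlacing of (i)/(ii) cannot close the gap, being only a necessary condition on $\mu$, as you note. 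To finish one needs a genuinely new input: for instance the classical fact that two elements of a finite abelian $p$-group lie in the same $\Aut(P)$-orbit if and only if they have the same height sequence $(h(x),h(px),h(p^2x),\dots)$, combined with the combinatorial dictionary showing that the horizontal strip between the types of $P$ and $P/\langle x\rangle$ recovers that height sequence; or else a direct induction on $|P|$ constructing the automorphism. A second, smaller unresolved point: in the non-torsion case the residual orbit problem is for the affine action $t\mapsto\alpha(t)+cs$ (with $\alpha\in\Aut(T)$, $s\in T$) on $T/cT$, which is not literally ``again the $p$-local rigidity'' for a quotient $P/\langle x\rangle$; it needs its own (e.g.\ duality-based) argument, which is also missing, though you do flag that this bookkeeping demands care.
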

The equality 
\begin{equation}\label{rkg}\rank(G\oplus (G/\Z g))=2\rank (G)-w(G, g) \equiv w(G, g) 
\quad (\operatorname{mod}\; 2)\end{equation} 
shows the following lemma.
\begin{lemma}\label{pari}
If  $G\oplus (G/\Z g)$ is isomorphic to $H\oplus (H/\Z h)$, one has $w(G, g)=w(H, h)$.
\end{lemma}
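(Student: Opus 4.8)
The plan is to deduce the lemma directly from the rank identity \eqref{rkg} established just above. The key observation is that isomorphic finitely generated abelian groups have the same rank, so an isomorphism $G\oplus(G/\Z g)\cong H\oplus(H/\Z h)$ immediately forces
\[\rank(G\oplus(G/\Z g)) = \rank(H\oplus(H/\Z h)).\]
Everything else is extracting parity information from this single equation.

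First I would apply the identity \eqref{rkg} to each side, which rewrites the displayed equality as $2\rank(G) - w(G,g) = 2\rank(H) - w(H,h)$. Reducing this modulo $2$ annihilates the even terms $2\rank(G)$ and $2\rank(H)$, leaving $w(G,g)\equiv w(H,h)\pmod{2}$. This is the whole arithmetic content of the proof, and it is precisely the congruence already recorded on the right-hand side of \eqref{rkg}.

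Finally I would upgrade the congruence to an honest equality: since $w(G,g)$ and $w(H,h)$ are each, by definition, elements of $\{0,1\}$, two such values that agree modulo $2$ must coincide, so $w(G,g)=w(H,h)$. There is essentially no obstacle here; the only step requiring a moment's care is this last one, namely that the mod-$2$ congruence is genuine equality exactly because $w$ is $\{0,1\}$-valued. This reflects the conceptual point that $w(G,g)$ is nothing but the parity of $\rank(G\oplus(G/\Z g))$, which is manifestly an isomorphism invariant.
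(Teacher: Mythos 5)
Your proof is correct and is exactly the argument the paper intends: the paper proves this lemma by citing the identity \eqref{rkg}, i.e.\ that $\rank(G\oplus(G/\Z g)) \equiv w(G,g) \pmod 2$, so that the rank (an isomorphism invariant) determines $w$ because $w$ is $\{0,1\}$-valued. You have simply written out the details the paper leaves implicit.
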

Applying Lemma \ref{star} and Lemma \ref{pari},
we prove the following proposition.
\begin{proposition}\label{comb}
For pairs $(G, g)$ and $(H, h)$ of finitely generated abelian groups $G, H$ and their elements $g\in G, h\in H$,
one has $G\cong H$ and $G/\Z g\cong H/\Z h$ if and only if  $G\oplus (G/\Z g)\cong H\oplus (H/\Z h)$.
\end{proposition}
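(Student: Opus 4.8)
The forward implication is immediate, so the content is the converse: an isomorphism $G\oplus(G/\Z g)\cong H\oplus(H/\Z h)$ should force both $G\cong H$ and $G/\Z g\cong H/\Z h$. The plan is to treat the free rank and the $p$-primary torsion at each prime separately. For the free part, Lemma \ref{pari} gives $w(G,g)=w(H,h)$, and then the rank identity \eqref{rkg} together with $\rank(G\oplus(G/\Z g))=\rank(H\oplus(H/\Z h))$ yields $\rank(G)=\rank(H)$ and hence $\rank(G/\Z g)=\rank(H/\Z h)$. It therefore remains to prove, for each prime $p$, the multiset equalities $I(G(p))=I(H(p))$ and $I((G/\Z g)(p))=I((H/\Z h)(p))$, starting only from the equality $I(G(p))\sqcup I((G/\Z g)(p))=I(H(p))\sqcup I((H/\Z h)(p))$ coming from the assumed isomorphism (the torsion part of an isomorphism restricts to each $p$-component).

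The key step is a reconstruction lemma: if a pair $(X,Y)$ of finite abelian $p$-groups satisfies $(**)$, then the \emph{ordered} pair $(I(X),I(Y))$ is determined by the single multiset $M:=I(X)\sqcup I(Y)$. To see this, write $S:=I(X)\cap I(Y)$, so the multiplicity of an integer $n$ in $I(X)$ (resp. $I(Y)$) is its multiplicity in $S$ plus its multiplicity in the leftover $I(X)\backslash S$ (resp. $I(Y)\backslash S$). The interlacing $b_1>a_1>b_2>a_2>\cdots$ forces the two leftovers to be \emph{sets} of pairwise distinct integers that are disjoint from one another, so each integer lies in at most one leftover and there with multiplicity one. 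Consequently $n$ occurs in $M$ an odd number of times exactly when it lies in one of the two leftovers, and the remaining even part $M\backslash\bigl((I(X)\backslash S)\sqcup(I(Y)\backslash S)\bigr)$ is precisely two copies of $S$; this is the general form of the computation in Example \ref{wstar}. Hence from $M$ alone one recovers $S$ by halving the even multiplicities, recovers the combined leftover as the set $L$ of odd-multiplicity integers, and then splits $L$ into the two leftovers by listing $L$ in decreasing order and alternating its entries according to the fixed pattern $b_1>a_1>b_2>a_2>\cdots$ dictated by $(**)$. This determines $I(X)$ and $I(Y)$ separately.

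It remains to apply the reconstruction lemma in a consistent orientation to both pairs. By Lemma \ref{star}(i),(ii) the pair $((G/\Z g)(p),G(p))$ satisfies $(**)$ when $g$ is torsion, and $(G(p),(G/\Z g)(p))$ satisfies $(**)$ when $g$ is non-torsion; the same dichotomy holds for $(H,h)$. Since Lemma \ref{pari} already gives $w(G,g)=w(H,h)$, the two pairs satisfy $(**)$ in the \emph{same} orientation (quotient-first in the torsion case, group-first otherwise). As $M$ is symmetric in its two summands, the assumed isomorphism shows the combined multisets at $p$ agree, so applying the orientation-sensitive reconstruction procedure to both yields $I(G(p))=I(H(p))$ and $I((G/\Z g)(p))=I((H/\Z h)(p))$. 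Running this over all primes and combining with the equality of ranks gives $G\cong H$ and $G/\Z g\cong H/\Z h$. The main obstacle is the reconstruction lemma itself: the decisive point, invisible without $(**)$, is that the interlacing condition forces every leftover multiplicity to be $0$ or $1$, which is exactly what makes ``odd multiplicity in $M$'' detect the symmetric difference and so makes the splitting of $L$ unambiguous.
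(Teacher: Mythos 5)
Your proof is correct and takes essentially the same approach as the paper's: reduce the free part to Lemma \ref{pari} and \eqref{rkg}, then work prime by prime, using Lemma \ref{star} and the interlacing in $(**)$ to show that the odd-multiplicity elements of the combined multiset split uniquely into the two leftover sets, which recovers $I(G(p))$ and $I((G/\Z g)(p))$ separately. Your ``reconstruction lemma'' is precisely the paper's core argument (also codified in Remark \ref{rco}), with the helpful extra observation made explicit that $(**)$ forces every leftover multiplicity to be $0$ or $1$.
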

\begin{proof}
We will show that there are two isomorphisms $G\cong H, \;G/\Z g\cong H/\Z h$ under the assumption $G\oplus (G/\Z g)\cong H\oplus (H/\Z h)$.

First,
we consider the case $g\in {\rm Tor}\; (G)$.
Lemma \ref{pari} implies $h\in {\rm Tor}\; (H)$
and $$\rank (G)=\frac{1}{2}\rank(G\oplus (G/\Z g))=\frac{1}{2}\rank (H\oplus (H/\Z h))=\rank(H).$$
The assumption implies $$G(p)\oplus (G/\Z g)(p)=(G\oplus (G/\Z g))(p)\cong (H\oplus (H/\Z h))(p)=H(p)\oplus (H/\Z h)(p),$$
and both of the pairs $((G/\Z g)(p), G(p))$ and $((H/\Z h)(p), H(p))$ satisfy $(**)$ by Lemma \ref{star} (1).
By the definition of $(**)$, a number $n\in \mathbb{N}$ appears in $I((G\oplus (G/\Z g))(p))$ odd times if and only if  $n$  appears in either $I(G(p))\backslash (I((G/\Z g)(p))\cap I(G(p)))$ or $I((G/\Z g)(p))\backslash (I((G/\Z g)(p))\cap I(G(p)))$.
Thus, we have
\[
\{n\in \mathbb{N}\;|\; n \;\text{appears in}\; I((G\oplus (G/\Z g))(p))\; \text{odd times}\}
=\{b_1, a_1, b_2, a_2, \dots\}\] 
where $b_i, a_i$ are the integers such that (see also Example \ref{wstar})
\[\{b_i\}=I(G(p))\backslash (I((G/\Z g)(p))\cap I(G(p))),\; \{a_i\}=I((G/\Z g)(p))\backslash (I((G/\Z g)(p))\cap I(G(p))),\]
\[b_1>a_1>b_2>a_2>\dots.\]
Since the set $$\{n\in \mathbb{N}\;|\; n \;\text{appears in}\; I((G\oplus (G/\Z g))(p))\; \text{odd times}\}$$ is uniquely divided into two sets $\{b_i\}, \{a_i\}$ so that $b_1>a_1>b_2>a_2>\dots$ holds, one has
\begin{equation}
\{b_i\}=I(H(p))\backslash (I((H/\Z h)(p))\cap I(H(p))),\label{B}\end{equation}
\begin{equation}
\{a_i\}=I((H/\Z h)(p))\backslash (I((H/\Z h)(p))\cap I(H(p))).\label{A}\end{equation}
Note that
\begin{align*}
I((G\oplus (G/\Z g))(p))=&I(G(p))\sqcup I((G/\Z g)(p))\\
=&( I((G/\Z g)(p))\cap I(G(p)))\sqcup (I((G/\Z g)(p))\cap I(G(p)))\\
&\sqcup (I(G(p))\backslash (I((G/\Z g)(p))\cap I(G(p))))\\
&\sqcup (I((G/\Z g)(p))\backslash (I((G/\Z g)(p))\cap I(G(p))))\\
=&( I((G/\Z g)(p))\cap I(G(p)))\sqcup (I((G/\Z g)(p))\cap I(G(p)))\sqcup\{b_i\}\sqcup\{a_i\}.
\end{align*}
By (\ref{B}) and (\ref{A}), the assumption $I((G\oplus (G/\Z g))(p))=I((H\oplus (H/\Z h))(p))$ implies
\begin{align*}
&( I((G/\Z g)(p))\cap I(G(p)))\sqcup (I((G/\Z g)(p))\cap I(G(p)))\\
=&(I((H/\Z h)(p))\cap I(H(p)))\sqcup (I((H/\Z h)(p))\cap I(H(p))),\end{align*}
and one has 
\[I((G/\Z g)(p))\cap I(G(p))=I((H/\Z h)(p))\cap I(H(p)), \]
\[I(G(p))=I(H(p)),\quad I((G/\Z g)(p))=I((H/\Z h)(p)).\]
This shows $(G, G/\Z g)\cong (H, H/\Z h)$.

Second,
we consider the case $g\not \in {\rm Tor}(G)$.
By Lemma \ref{pari},
one has $h\not \in {\rm Tor} (H)$ and $\rank (G)=\rank (H)$.
The same argument for $(G(p), (G/\Z g)(p))$ and $(H(p), (H/\Z h)(p))$ as in the case $g\in {\rm Tor} (G)$ shows
\[I((G/\Z g)(p))\backslash (I((G/\Z g)(p))\cap I(G(p)))=\{b_i\}=I((H/\Z h)(p))\backslash (I((H/\Z h)(p))\cap I(H(p))).\]
\[I(G(p))\backslash (I((G/\Z g)(p))\cap I(G(p)))=\{a_i\}=I(H(p))\backslash (I((H/\Z h)(p))\cap I(H(p))),\]
\[I((G/\Z g)(p))\cap I(G(p))=I((H/\Z h)(p))\cap I(H(p)).\]
Thus, one has
\[I((G/\Z g)(p))=I((H/\Z h)(p)),\quad I(G(p))=I(H(p)),\]
and we conclude $(G, G/\Z g)\cong (H, H/\Z h)$.
\end{proof}
\begin{remark}\label{rco}
For a given $G\oplus (G/\Z g)$,
one can recover $(G, G/\Z g)$ as follows.\\
\noindent{\bf Step 1} If $\rank (G\oplus (G/\Z g))$ is odd, we set $\rank (G):=\frac{1}{2}(1+\rank(G\oplus (G/\Z g)))$.
If $\rank (G\oplus (G/\Z g))$ is even, we set $\rank (G):=\frac{1}{2}\rank(G\oplus (G/\Z g))$.\\
\noindent{\bf Step 2} Determine the set $$\{n\in \mathbb{N}\;|\; n \;\text{appears in}\; I((G\oplus (G/\Z g))(p))\; \text{odd times}\}=\{b_1>a_1>b_2>a_2>\dots\}.$$
\noindent{\bf Step 3} If $\rank (G\oplus (G/\Z g))$ is odd, we set 
\[I((G/\Z g)(p))\backslash (I((G/\Z g)(p))\cap I(G(p))):=\{b_i\},\quad I(G(p))\backslash (I((G/\Z g)(p))\cap I(G(p))):=\{a_i\}.\]
If $\rank (G\oplus (G/\Z g))$ is even, we set
\[I((G/\Z g)(p))\backslash (I((G/\Z g)(p))\cap I(G(p))):=\{a_i\},\quad I(G(p))\backslash (I((G/\Z g)(p))\cap I(G(p))):=\{b_i\}.\]
\noindent{\bf Step 4} Determine $I((G/\Z g)(p))\cap I(G(p))$ from the equality
\[(I((G/\Z g)(p))\cap I(G(p)))\sqcup(I((G/\Z g)(p))\cap I(G(p)))=I((G\oplus (G/\Z g))(p))\backslash \{b_1, a_1, b_2, a_2, \dots\}.\]
These 4 steps give us  $$G(p),\; (G/\Z g)(p),\; \rank(G), \;\rank (G/\Z g):=\rank (G\oplus (G/\Z g))-\rank(G).$$
\end{remark}
We present the following main theorem in this paper.
\begin{theorem}\label{MTfg}
Let $\A, \B$ be unital Kirchberg algebras with finitely generated K-groups.
Then the followings are equivalent:
\begin{enumerate}
\renewcommand{\theenumi}{(\roman{enumi})}
\renewcommand{\labelenumi}{\textup{\theenumi}}
\item $\A\cong\B.$
\item $\Extt^1(\A)\cong \Extt^1(\B) \, \text{ and } \,  \Extw^0(\A)\cong \Extw^0(\B). \label{tw}$
\item $\Extt^i(\A)\cong \Extt^i(\B),\quad i=0, 1.\label{tt}$
\end{enumerate}
%
\end{theorem}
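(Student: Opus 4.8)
The plan is to prove the cycle of implications $(i)\Rightarrow(iii)\Rightarrow(ii)\Rightarrow(i)$; the first of these is immediate, since an isomorphism $\A\cong\B$ induces isomorphisms of every $\sqK$-group in sight. The engine of the argument is the translation of the total groups into $\K$-theoretic data of the reciprocal algebra $\widehat{\A}$. By Proposition \ref{dgi}(i) together with \eqref{ste} one has $\Exts^1(\A)\cong\K_0(\widehat{\A})$ matching $\iota_\A$ with $[1_{\widehat{\A}}]_0$, and $\Extw^1(\A)\cong\Exts^1(\A)/\Z\iota_\A$; writing $G:=\Exts^1(\A)$ and $g:=\iota_\A$ this gives $\Extt^1(\A)\cong G\oplus(G/\Z g)$. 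By Bott periodicity $\Exts^0(\A)=\sqK(C_\A,S)\cong\K_1(\widehat{\A})$ and $\Extw^0(\A)=\sqK(\A,\mathbb{C})\cong\K_0(C_{\widehat{\A}})$, and the lower row of the six term sequence \eqref{E6} gives a short exact sequence $0\to\Exts^0(\A)\to\Extw^0(\A)\to Q\to0$ in which $Q=\Ker(\iota_\A\colon\Z\to\Exts^1(\A))$ is free of rank $1-w(\widehat{\A})$ (it equals $\Z$ exactly when $[1_{\widehat{\A}}]_0$ is torsion). Hence $\Extw^0(\A)\cong\Exts^0(\A)\oplus\Z^{1-w(\widehat{\A})}$ and $\Extt^0(\A)\cong\Exts^0(\A)^{\oplus2}\oplus\Z^{1-w(\widehat{\A})}$.

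The conceptual heart is $(ii)\Rightarrow(i)$. In the notation above, the hypothesis $\Extt^1(\A)\cong\Extt^1(\B)$ reads $G\oplus(G/\Z g)\cong H\oplus(H/\Z h)$, so Proposition \ref{comb} splits off the two separate isomorphisms $\Exts^1(\A)\cong\Exts^1(\B)$ and $\Extw^1(\A)\cong\Extw^1(\B)$. Together with the assumed $\Extw^0(\A)\cong\Extw^0(\B)$ this is precisely an isomorphism of triples $(\Exts^1(\A),\Extw^1(\A),\Extw^0(\A))\cong(\Exts^1(\B),\Extw^1(\B),\Extw^0(\B))$, whence $\A\cong\B$ by Theorem \ref{extcp}. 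This step explains why one must pass from the weak extension group to the \emph{total} one: the quotient $G/\Z g$ alone forgets $g$, whereas $G\oplus(G/\Z g)$ remembers the pair $(G,g)$ by the combinatorics of Proposition \ref{comb}.

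It remains to prove $(iii)\Rightarrow(ii)$, namely to recover $\Extw^0(\A)\cong\Extw^0(\B)$. From $\Extt^0(\A)\cong\Exts^0(\A)^{\oplus2}\oplus\Z^{1-w(\widehat{\A})}$ one reads off $1-w(\widehat{\A})\equiv\rank\Extt^0(\A)\pmod2$, so $\Extt^0(\A)\cong\Extt^0(\B)$ forces $w(\widehat{\A})=w(\widehat{\B})$ (the same conclusion also follows from $\Extt^1(\A)\cong\Extt^1(\B)$ via Lemma \ref{pari}). Cancelling the common free summand $\Z^{1-w(\widehat{\A})}$ gives $\Exts^0(\A)^{\oplus2}\cong\Exts^0(\B)^{\oplus2}$, and comparing ranks and $p$-primary multiplicities---each of which is merely doubled---yields $\Exts^0(\A)\cong\Exts^0(\B)$. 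Substituting back into $\Extw^0(\A)\cong\Exts^0(\A)\oplus\Z^{1-w(\widehat{\A})}$ closes the cycle.

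The only serious ingredient, Proposition \ref{comb}, is already in hand, so the theorem reduces to combining it with the reciprocal dictionary of Proposition \ref{dgi} and the completeness statement Theorem \ref{extcp}. The points that require care are purely algebraic: correctly tracking the free summand $\Z^{1-w(\widehat{\A})}$ that separates $\Extw^0$ from $\Exts^0$ (governed by whether the class of the unit is torsion), and the two elementary cancellation facts for finitely generated abelian groups---cancellation of free summands and $M^{\oplus2}\cong N^{\oplus2}\Rightarrow M\cong N$---both immediate from the structure theorem. I expect no further obstacle beyond this bookkeeping.
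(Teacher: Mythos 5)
Your proof is correct, and its core coincides with the paper's: both arguments hinge on writing $\Extt^1(\A)\cong G\oplus(G/\Z g)$ with $G=\Exts^1(\A)$, $g=\iota_\A$ (using $\Extw^1(\A)\cong\Exts^1(\A)/\Z\iota_\A$ from \eqref{E6}), splitting this isomorphism by Proposition~\ref{comb} into $\Exts^1(\A)\cong\Exts^1(\B)$ and $\Extw^1(\A)\cong\Extw^1(\B)$, and then invoking completeness of the triple $(\Exts^1,\Extw^1,\Extw^0)$ (Theorem~\ref{extcp}) to obtain (ii)$\Rightarrow$(i). Where you genuinely diverge is the implication (iii)$\Rightarrow$(ii). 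The paper applies Proposition~\ref{comb} a \emph{second} time: by \eqref{eq:5.4} one can choose $g\in G=\Extw^0(\A)$ with $G/\Z g\cong\Exts^0(\A)$, so $\Extt^0(\A)\cong G\oplus(G/\Z g)$ and the combinatorial proposition again yields both $\Extw^0(\A)\cong\Extw^0(\B)$ and $\Exts^0(\A)\cong\Exts^0(\B)$. You instead exploit the fact that in degree $0$ the strong group is an honest direct summand, $\Extw^0(\A)\cong\Exts^0(\A)\oplus\Z^{1-w(\whatA)}$ (unlike degree $1$, where $\Extw^1$ is only a quotient of $\Exts^1$), match parities of free ranks to get $w(\whatA)=w(\widehat{\B})$, and finish with two elementary cancellation facts for finitely generated abelian groups: cancellation of a free summand, and $M^{\oplus 2}\cong N^{\oplus 2}\Rightarrow M\cong N$. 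Both routes are valid; the paper's is uniform across the two degrees, while yours isolates exactly where the hard combinatorics of Proposition~\ref{comb} is indispensable --- namely for $\Extt^1$, where the pair $(G,g)$ must be recovered from $G\oplus(G/\Z g)$ --- and replaces it by structure-theorem bookkeeping in degree $0$, where the splitting makes the recovery trivial.
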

\begin{proof}
Since 
\begin{equation}\label{eq:5.4}
\Exts^0(\A)\oplus\Z^{(1-w(\Exts^1(\A), \iota_\A))}\cong\Extw^0(\A)
\end{equation}
 holds by (\ref{E6}), there exists an element 
$g\in G=\Extw^0(\A)$ satisfying $G/\Z g=\Exts^0(\A)$, 
 and the isomorphism $\Extt^0(\A)\cong \Extt^0(\B)$ 
 implies 
$\Exts^0(\A)\cong\Exts^0(\B)$ and $\Extw^0(\A)\cong\Extw^0(\B)$ by Proposition \ref{comb}.
Thus, the condition \ref{tt} implies \ref{tw}, 
and it is now enough to show that \ref{tw} implies $\A\cong\B$.
Since $\Extw^1(\A)\cong \Exts^1(\A)/\Z \iota_\A$,
the assertion \ref{tw} with Proposition \ref{comb} 
tells us that there exist isomorphisms 
$$
\Exts^1(\A)\cong \Exts^1(\B),\quad \Extw^1(\A)\cong \Extw^1(\B).
$$
As the two groups $\Extw^0(\A)$ and $\Exts^1(\A)$
determine $\Exts^0(\A)$
because of \eqref{eq:5.4},
the conditions
$\Exts^1(\A) \cong \Exts^1(\B)$ 
and
$\Extw^0(\A)\cong \Extw^0(\B)$
imply    
$\Exts^0(\A) \cong \Exts^0(\B).$
Hence Theorem \ref{extcp} shows $\A\cong \B$.
\end{proof}
\begin{remark}
By \eqref{eq:5.4}, 
we know that the assertion \ref{tw} above is equivalent to 
the following condition:
\begin{equation*}
\Extt^1(\A)\cong \Extt^1(\B) \, \text{ and } \, 
 \Exts^0(\A)\cong \Exts^0(\B). 
\end{equation*}
\end{remark}
By Theorem \ref{extcp}, we obtain the following corollary.
\begin{corollary}
\begin{align*}
&\{(\Extt^1(\A), \Extt^0(\A))\; |\; \A : \text{unital Kirchberg algebra with finitely generated K-groups}\}\\
=&\{(G_1\oplus (G_1/\Z g), \; G_0\oplus G_0 \oplus \Z^{1-w(G_1, g)})\; |\; G_i : \text{finitely generated abelian groups}, \;g\in G_1\}.
\end{align*}
\end{corollary}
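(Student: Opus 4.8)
The plan is to read off both inclusions of the set equality directly from Theorem \ref{extcp}, since $(\Extt^1(\A), \Extt^0(\A))$ is by definition a simple function of the triple $(\Exts^1(\A), \Extw^1(\A), \Extw^0(\A))$ together with $\Exts^0(\A)$. First I would invoke the realization statement contained in the proof of Theorem \ref{extcp}: via the isomorphism $(\Exts^1(\A), \iota_\A, \Exts^0(\A)) \cong (\K_0(\whatA), [1_{\whatA}]_0, \K_1(\whatA))$ of Proposition \ref{dgi}(i), every triple $(G_1, g, G_0)$ of finitely generated abelian groups with $g \in G_1$ is realized as $(\Exts^1(\A), \iota_\A, \Exts^0(\A))$ for some unital Kirchberg algebra $\A$ with finitely generated K-groups, and conversely every such $\A$ furnishes such a triple.

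Next I would compute the two components for a fixed $\A$, writing $G_1 := \Exts^1(\A)$, $g := \iota_\A$, and $G_0 := \Exts^0(\A)$. For the first component, the six term sequence (\ref{E6}) gives $\Extw^1(\A) \cong \Exts^1(\A)/\Z\iota_\A = G_1/\Z g$, so that
$$\Extt^1(\A) = \Exts^1(\A) \oplus \Extw^1(\A) \cong G_1 \oplus (G_1/\Z g).$$
For the second component I would use \eqref{eq:5.4}, namely $\Extw^0(\A) \cong \Exts^0(\A) \oplus \Z^{1-w(\Exts^1(\A), \iota_\A)} = G_0 \oplus \Z^{1-w(G_1, g)}$, whence
$$\Extt^0(\A) = \Exts^0(\A) \oplus \Extw^0(\A) \cong G_0 \oplus G_0 \oplus \Z^{1-w(G_1, g)}.$$

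Combining these, for every $\A$ the pair $(\Extt^1(\A), \Extt^0(\A))$ has exactly the form displayed on the right-hand side (with $G_1 = \Exts^1(\A)$, $g = \iota_\A$, $G_0 = \Exts^0(\A)$), which yields the inclusion $\subseteq$; and conversely, for an arbitrary $(G_1, g, G_0)$ the realization step produces an $\A$ whose pair equals the prescribed right-hand element, yielding $\supseteq$. The only point demanding care is the bookkeeping: one must verify that the free rank $1 - w(G_1, g)$ appearing in $\Extt^0(\A)$ is governed by the same element $g = \iota_\A \in G_1 = \Exts^1(\A)$ that controls the quotient $G_1/\Z g$ in $\Extt^1(\A)$. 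This compatibility is precisely what the identification $(\Exts^1(\A), \iota_\A, \Exts^0(\A)) \cong (\K_0(\whatA), [1_{\whatA}]_0, \K_1(\whatA))$ guarantees, so nothing beyond transcribing \eqref{eq:5.4} and (\ref{E6}) into the definition of $\Extt^i$ is required.
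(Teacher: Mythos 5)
Your proposal is correct and follows essentially the same route as the paper, which derives the corollary directly from Theorem \ref{extcp}: the realization of every triple $(G_1,g,G_0)$ as $(\Exts^1(\A),\iota_\A,\Exts^0(\A))$ via Proposition \ref{dgi}(i), combined with the identifications $\Extw^1(\A)\cong\Exts^1(\A)/\Z\iota_\A$ from \eqref{E6} and $\Extw^0(\A)\cong\Exts^0(\A)\oplus\Z^{1-w(\Exts^1(\A),\iota_\A)}$ from \eqref{eq:5.4}, is exactly the content the paper compresses into its one-line citation of that theorem. Your bookkeeping that the same distinguished element $g=\iota_\A$ governs both components is the right point to check, and it is handled correctly.
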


In the case of Cuntz--Krieger algebras, 
the single invariant 
$\Extt^1(\OA) (=\Exts^1(\OA)\oplus \Extw^1(\OA))$ 
determines the isomorphism class of $\OA$ (see Corollary \ref{OAinv}).
For an $N\times N$ matrix $A$ with entries in $\{0,1\}$,
the isomorphism $\Extw^1(\OA)\cong\Z^N/(I_N-A)\Z^N$ 
is well-known (see \cite{CK}).
We define another $N\times N$ matrix $\widehat{A}$
with entries in $\{0,1\}$ by
\[\widehat{A}=A+R_1-AR_1,\quad 
R_1:=
\begin{bmatrix}
1&\cdots&1\\
0&\cdots &0\\
\vdots&\ddots&\vdots\\
0&\cdots&0
\end{bmatrix}.
\]
As \cite{MaPre2021exts} shows 
$\Exts^1(\OA)\cong\Z^N/(I_N-\widehat{A})\Z^N,$
we have 
$$
\Extt^1(\OA)\cong \Z^{2N}/(I_{2N}-\widehat{A}\oplus A)\Z^{2N}.
$$
\begin{corollary}\label{OAinv}
Let $A=[A(i,j)]_{i,j=1}^N, B=[B(i,j)]_{i,j=1}^M$ 
be irreducible non-permutation matrices with entries in $\{0,1\}$.
Then there exists an isomorphism $\OA\cong\OB$
if and only if one has
\[\Z^{2N}/(I_{2N}-\widehat{A}\oplus A)\Z^{2N}\cong\Z^{2M}/(I_{2M}-\widehat{B}\oplus B)\Z^{2M}.\]
\end{corollary}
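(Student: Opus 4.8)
The plan is to deduce the corollary from Theorem~\ref{MTfg} by showing that, \emph{for Cuntz--Krieger algebras}, the second invariant $\Extw^0$ appearing in condition (ii) of that theorem is already encoded in the single group $\Extt^1$. The easy direction is immediate: an isomorphism $\OA\cong\OB$ yields $\Extt^1(\OA)\cong\Extt^1(\OB)$, and via the identification $\Extt^1(\OA)\cong\Z^{2N}/(I_{2N}-\widehat{A}\oplus A)\Z^{2N}$ recalled just before the statement, this is exactly the displayed group isomorphism. So the real content lies in the reverse implication.

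First I would record the structural features of $\OA$ that make the reduction work. Since $A$ is irreducible and not a permutation matrix, $\OA$ is a unital Kirchberg algebra with finitely generated K-groups, so Theorem~\ref{MTfg} applies; moreover $\chi(\OA)=0$ and $\K_1(\OA)$ is torsion-free (it embeds in $\Z^N$). Feeding these into the UCT, I obtain
\[
\Extw^0(\OA)\cong\Z^{\rank(\K_0(\OA))}\oplus\Tor(\K_1(\OA))=\Z^{\rank(\K_0(\OA))},
\qquad
\rank(\Extw^1(\OA))=\rank(\K_1(\OA)).
\]
Because $\chi(\OA)=0$ forces $\rank(\K_0(\OA))=\rank(\K_1(\OA))$, these combine into the key identity
\[
\Extw^0(\OA)\cong\Z^{\rank(\Extw^1(\OA))},
\]
so that $\Extw^0(\OA)$ is a free abelian group whose rank is determined by $\Extw^1(\OA)$.

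Next I would run the reduction. Using $\Extw^1(\OA)\cong\Exts^1(\OA)/\Z\iota_{\OA}$ from \eqref{E6}, the total group splits as $\Extt^1(\OA)\cong\Exts^1(\OA)\oplus(\Exts^1(\OA)/\Z\iota_{\OA})$, i.e. it has the shape $G\oplus(G/\Z g)$ with $G=\Exts^1(\OA)$ and $g=\iota_{\OA}$. Hence an isomorphism $\Extt^1(\OA)\cong\Extt^1(\OB)$ is an isomorphism $G\oplus(G/\Z g)\cong H\oplus(H/\Z h)$, and Proposition~\ref{comb} yields $\Exts^1(\OA)\cong\Exts^1(\OB)$ together with $\Extw^1(\OA)\cong\Extw^1(\OB)$. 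In particular $\rank(\Extw^1(\OA))=\rank(\Extw^1(\OB))$, so the key identity of the previous step gives $\Extw^0(\OA)\cong\Extw^0(\OB)$. Both clauses of condition (ii) in Theorem~\ref{MTfg} are thereby verified, and I conclude $\OA\cong\OB$.

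The main obstacle, and the only point where Cuntz--Krieger algebras enter beyond the generic theory, is the collapse $\Extw^0(\OA)\cong\Z^{\rank(\Extw^1(\OA))}$: it is exactly this that allows the pair $(\Extt^1,\Extw^0)$ to be replaced by the single invariant $\Extt^1$. It rests squarely on the two special facts $\chi(\OA)=0$ and $\Tor(\K_1(\OA))=0$, which fail for general unital Kirchberg algebras, so I would make sure these are cleanly justified from the irreducibility hypothesis and the matrix description of the K-groups before invoking Proposition~\ref{comb} and Theorem~\ref{MTfg}.
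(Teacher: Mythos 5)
Your proof is correct and follows essentially the same route as the paper: both directions reduce to condition (ii) of Theorem \ref{MTfg} by showing that $\Extw^0(\OA)$ is free abelian of rank $\rank(\Extw^1(\OA))$ (using the Cuntz--Krieger facts $\chi(\OA)=0$ and ${\rm Tor}(\K_1(\OA))=0$), so that $\Extt^1(\OA)\cong\Extt^1(\OB)$ already forces $\Extw^0(\OA)\cong\Extw^0(\OB)$. The only cosmetic difference is that you invoke the full Proposition \ref{comb} to equate the ranks of $\Extw^1$, whereas the paper gets the same rank equality from the lighter Lemma \ref{pari} combined with \eqref{rkg}.
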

\begin{proof}
We show the if part, and it is enough to show $\Extw^0(\OA)\cong \Extw^0(\OB)$ under the assumption 
$\Extt^1(\OA)\cong \Extt^1(\OB)$ by Theorem \ref{MTfg}.
Note that 
\begin{align*}
\Extw^0(\OA)&\cong \sqK(\OA, \mathbb{C})\\
&\cong \K_1(\OA)\\
&=\Z^{\rank(\K_0(\OA))}\\
&=\Z^{\rank(\Extw^1(\OA))}\\
&=\Z^{\rank(\Exts^1(\OA))-w(\Exts^1(\OA), \iota_{\OA})}.\end{align*}
Since $\Extw^1(\OA)\cong \Exts^1(\OA)/\Z\iota_{\OA}$, Lemma \ref{pari} and (\ref{rkg}) show
\[w(\Exts^1(\OA), \iota_{\OA})=w(\Exts^1(\OB), \iota_{\OB}),\quad 2\rank(\Exts^1(\OA))=2\rank(\Exts^1(\OB)),\]
\[\rank(\Exts^1(\OA))-w(\Exts^1(\OA), \iota_{\OA})=\rank(\Exts^1(\OB))-w(\Exts^1(\OB), \iota_{\OB}).\]
Thus, one has $\Extw^0(\OA)\cong \Extw^0(\OB)$.
\end{proof}

\medskip

The present paper is a revised version of the paper entitled
"$\K$-theoretic invariants for unital Kirchberg algebras 
with finitely generated $\K$-groups"
which appeared in arXiv:2408.09359.

\medskip

{\it Acknowledgment:}
K. Matsumoto is supported by JSPS KAKENHI Grant Number 24K06775.
T. Sogabe is supported by JSPS KAKENHI Grant Number 24K16934.

\end{document}